\def\arxivorIP{0}
\def\myabstract{
Regularization is a core component of modern inverse problems, as it helps establish the well-posedness of the solution of interest. Popular 
regularization approaches include variational regularization and iterative regularization. The former involves solving a variational optimization 
problem, which consists of a data-fidelity term and a regularization term, balanced by an appropriate weighting parameter. The latter mitigates 
overfitting to noise by selecting a suitable stopping time during the iterative process.
A key topic in the study of regularization is the relationship between the regularized solution and the original ground truth. When the ground truth 
possesses a low-complexity structure referred to as the ``model''  it can be shown that under appropriate regularization promoting the same structure, 
the solution to the regularized problem is robust to small perturbations. This property is called ``model consistency''.
For variational regularization, model consistency in linear inverse problems has been studied in~\cite{vaiter2017model}. However, for iterative regularization, 
model consistency remains an open question. In this paper, building on recent developments in partial smoothness~\cite{vaiter2017model}, 
we show that when the noise level is sufficiently small and an appropriate stopping criterion is used, iterative
regularization is model consistent as well. Moreover, we show that the considered algorithm exhibits local linear behavior of the regularization.
We provide numerical simulations to support our theoretical findings.}
\def\mykeywords{
linear inverse problems, iterative regularization, dual gradient descent, partial smoothness, model consistency.
}
\newcommand{\pa}[1]{(#1)}
\newcommand{\Pa}[1]{\big({#1}\big)}
\newcommand{\bPa}[1]{\mathlarger{\big(}{#1}\mathlarger{\big)}}
\newcommand{\Ppa}[1]{\left({#1}\right)}
\newcommand{\Ba}[1]{\big\{#1\big\}}
\newcommand{\bBa}[1]{\Big\{#1\Big\}}
\newcommand{\norm}[1]{{|\kern-1.125pt|} #1 {|\kern-1.125pt|}}
\newcommand{\msum}{\mathbin{\scalebox{1.25}{\ensuremath{\sum}}}}
\newcommand{\qfrac}[2]{%
    {\mathsmaller{\frac{\raisebox{0.15em}{\small $#1$}}{\raisebox{-0.25em}{\small $#2$}}}}
}
\newcommand{\sfrac}[2]{%
    {\mathsmaller{\frac{\raisebox{0.05em}{\footnotesize $#1$}}{\raisebox{-0.15em}{\footnotesize $#2$}}}}
}
\newcommand{\qandq}{ \quad \textrm{and} \quad }
\newcommand{\bbR}{\mathbb{R}}
\newcommand{\bbN}{\mathbb{N}}
\newcommand{\calT}{\mathcal{T}}
\newcommand{\calM}{\mathcal{M}}
\newcommand{\Msol}{{\mathcal{M}_{\wsol}}}
\newcommand{\calU}{\mathcal{U}}
\newcommand{\calY}{\mathbb{R}^{n}}
\newcommand{\calX}{{X}}
\newcommand{\calW}{\mathbb{R}^{p}}
\newcommand{\calI}{\mathcal{I}}
\newcommand{\calB}{\mathcal{B}}
\newcommand{\calF}{\mathcal{F}}
\newcommand{\find}{\mathrm{find}\quad}
\newcommand{\aff}{\mathrm{aff}}
\newcommand{\ri}{\mathrm{ri}}
\newcommand{\Id}{\mathrm{Id}}
\newcommand{\prox}{\mathrm{prox}}
\newcommand{\qstq}{\quad\mathrm{subject~to}\quad}
\newcommand{\lsc}{\Gamma_0(\calW)}
\newcommand{\bdy}{\mathrm{bdy}}
\newcommand{\supp}{\mathrm{supp}}
\newcommand{\rank}{\mathrm{rank}}
\newcommand{\dist}{\mathrm{dist}}
\newcommand{\Ker}{\mathrm{ker}}
\newcommand{\DDIF}{D_{\mathrm{_{DIF}}}}
\newcommand{\SC}{\mathrm{SC}}
\newcommand{\INJ}{\mathrm{INJ}}
\newcommand{\yzero}{{y^{\dagger}}}
\newcommand{\yd}{{y_{_{\delta}}}}
\newcommand{\wsol}{w^{\dagger}}
\newcommand{\vsol}{v^{\dagger}}
\newcommand{\zsol}{z^{\dagger}}
\newcommand{\vdsol}{v_{\delta}^{\dagger}}
\newcommand{\pldw}{p_{\lambda,\delta}(w)}
\newcommand{\Pld}{P_{\lambda,\delta}}
\newcommand{\Pzero}{P_{0}}
\newcommand{\Pd}{P_{\delta}}
\newcommand{\Ra}{R_{\alpha}}
\newcommand{\wk}{w^{(k)}}
\newcommand{\wkmd}{w^{(k-1)}_{\delta}}
\newcommand{\wkd}{w^{(k)}_{\delta}}
\newcommand{\wkpd}{w^{(k+1)}_{\delta}}
\newcommand{\wkmzero}{w^{(k-1)}_{0}}
\newcommand{\wkzero}{w^{(k)}_{0}}
\newcommand{\wKmd}{w^{(\Km)}_{\delta}}
\newcommand{\vkmd}{v^{(k-1)}_{\delta}}
\newcommand{\vkd}{v^{(k)}_{\delta}}
\newcommand{\vkpd}{v^{(k+1)}_{\delta}}
\newcommand{\vkmzero}{v^{(k-1)}_{0}}
\newcommand{\vkzero}{v^{(k)}_{0}}
\newcommand{\vzerod}{v^{(0)}_{\delta}}
\newcommand{\zkd}{z^{(k)}_{\delta}}
\newcommand{\zkzero}{z^{(k)}_{0}}
\newcommand{\uzerod}{u^{(0)}_{\delta}}
\newcommand{\umd}{u^{(-1)}_{\delta}}
\newcommand{\ukmd}{u^{(k-1)}_{\delta}}
\newcommand{\ukmmd}{u^{(k-2)}_{\delta}}
\newcommand{\uimd}{u^{(i-1)}_{\delta}} 
\newcommand{\uimmd}{u^{(i-2)}_{\delta}}
\newcommand{\ukd}{u^{(k)}_{\delta}}
\newcommand{\ukzero}{u^{(k)}_{0}} 
\newcommand{\ukmzero}{u^{(k-1)}_{0}}
\newcommand{\ukmmzero}{u^{(k-2)}_{0}}
\newcommand{\uimzero}{u^{(i-1)}_{0}}
\newcommand{\uimmzero}{u^{(i-2)}_{0}}
\newcommand{\rkd}{r^{(k)}_{\delta}}
\newcommand{\rkzero}{r^{(k)}_{0}}
\newcommand{\ld}{\lambda_{\delta}}
\newcommand{\kd}{k_{\delta}}
\newcommand{\wkdd}{w^{(\kd)}_{\delta}}
\newcommand{\wldkdd}{w^{(\ld/\kd)}_{\delta}}
\newcommand{\vkdd}{v^{(\kd)}_{\delta}}
\newcommand{\ukdd}{u^{(\kd)}_{\delta}}
\newcommand{\zkdd}{z^{(\kd)}_{\delta}}
\newcommand{\vkdzero}{v^{(\kd)}_{0}}
\newcommand{\zk}{z^{(k)}}
\newcommand{\zdk}{z_{\delta}^{(k)}}
\newcommand{\Km}{\underline{k}}
\newcommand{\KM}{\overline{k}}
\newcommand{\Kd}{ \left[ \Km,~ \KM \right] }
\newcommand{\Tm}{\underline{t}}
\newcommand{\TM}{\overline{t}}
\newcommand{\Tw}{T_{w}}
\newcommand{\Twsol}{T_{\wsol}}
\newcommand{\XTwsol}{X_{\Twsol}}
\newcommand{\PTwsol}{P_{\Twsol}}
\newcommand{\Mwsol}{\calM_{\wsol}}
\newcommand{\MIR}{M_{\mathrm{DGD}}}
\newcommand{\rhoMIR}{\rho_{_{\MIR}}}
\newcommand{\cd}{c_{\delta}}
\newcommand{\td}{t_{\delta}}
\newcommand{\wdt}{w_{\delta}(t)}
\newcommand{\vdt}{v_{\delta}(t)}
\newcommand{\wdtd}{w_{\delta}(\td)}
\newcommand{\vzero}{v_{0}}
\newcommand{\dvdt}{\dot{v}_{\delta}(t)}
\let\csname equation*\endcsname=\relax
\let\csname endequation*\endcsname=\relax
\newtheorem{theorem}{Theorem}[section]
\newtheorem{definition}{Definition}[section]
\newtheorem{remark}{Remark}[section]
\newtheorem{corollary}{Corollary}[section]
\newtheorem{lemma}{Lemma}[section]
\begin{document}

\title[Model Consistency of the Iterative Regularization of DGD]{Model Consistency of the Iterative Regularization of Dual Ascent for Low-Complexity Regularization}

\author{Jie Gao$^1$, Cesare Molinari$^2$, Silvia Villa$^3$, Jingwei Liang$^1$}

\address{$^1$ School of Mathematical Sciences and Institute of Natural Sciences, Shanghai Jiao Tong University, Shanghai, People’s Republic of China}
\address{$^2$ MaLGa, DIMA, Dipartimento di Eccellenza 2023-2027, Universit\`{a} degli Studi di Genova, Genoa, Italy}
\address{$^3$ MaLGa, Dipartimento di Matematica, Universit\`{a} degli Studi di Genova, Genoa, Italy}
 
\ead{ \{zjgaojie,\ jingwei.liang\}@sjtu.edu.cn, cecio.molinari@gmail.com, silvia.villa@unige.it}
\vspace{10pt}
\begin{indented}
\item[]\today
\end{indented}

\begin{abstract}
\myabstract
\end{abstract}
\noindent{\it Keywords}: {\mykeywords}

\submitto{\IP}
%
%

\fi 

\ifnum\arxivorIP=0
\documentclass[11pt,a4paper,reqno]{article} 
\usepackage{mysty}

\usepackage{amssymb}
\usepackage[thinlines]{easytable}
\usepackage{relsize}

\usepackage{fullpage}
\usepackage{adjustbox}
\flushbottom

\usepackage[normalem]{ulem}

\usepackage{multirow}
\usepackage{graphicx} 

\setlength{\parindent}{12pt}

\SetLabelAlign{parright}{\parbox[t]{\labelwidth}{\raggedleft{#1}}}
\setlist[description]{style=multiline,topsep=4pt,align=parright}

\makeatletter
\let\reftagform@=\tagform@
\def\tagform@#1{\maketag@@@{(\ignorespaces\textcolor{black}{#1}\unskip\@@italiccorr)}}
\newcommand{\iref}[1]{\textup{\reftagform@{\tcr{\ref{#1}}}}}
\makeatother

\usepackage[noindentafter]{titlesec}
\usepackage{bold-extra}
\usepackage{anyfontsize} 

\titlespacing\section{0pt}{11pt plus 4pt minus 2pt}{6pt plus 2pt minus 2pt}
\titlespacing\subsection{0pt}{10pt plus 4pt minus 2pt}{4pt plus 2pt minus 2pt}
\titlespacing\subsubsection{0pt}{8pt plus 4pt minus 2pt}{4pt plus 2pt minus 2pt}
\titlespacing\paragraph{0pt}{6pt plus 4pt minus 2pt}{6pt plus 2pt minus 2pt}


\begin{document}

\setlength{\abovedisplayskip}{4.5pt}
\setlength{\belowdisplayskip}{4pt}

\title{Model Consistency of the Iterative Regularization of Dual Ascent for Low-Complexity Regularization}
\author{
Jie Gao\thanks{School of Mathematical Sciences and Institute of Natural Sciences, Shanghai Jiao Tong University, Shanghai, China. E-mail:  \{zjgaojie,\ jingwei.liang\}@sjtu.edu.cn} \and
 		Cesare Molinari \thanks{MaLGa, DIMA, Dipartimento di Eccellenza 2023-2027, Universit\`{a} degli Studi di Genova, Genoa, Italy. E-mail: cecio.molinari@gmail.com; silvia.villa@unige.it} \and
        Silvia Villa\footnotemark[2] \and
		Jingwei Liang\footnotemark[1]
        }
\date{}
\maketitle

\begin{abstract}
\myabstract
\end{abstract}

\begin{keywords}
\mykeywords
\end{keywords}

\begin{AMS}
{ 47A52 $\cdot$ 49J52 $\cdot$ 60H50 $\cdot$ 65K10 $\cdot$ 90C31 }
\end{AMS}

\fi 

\section{Introduction}

Linear inverse problems are widely encountered in many fields through science and engineering, such as signal processing, compressed sensing, (medical) image processing, and remote sensing.
In (linear) inverse problems, the target of interests, denoted by $\wsol$, is not directly accessible. It is  only observed indirectly through a linear measurement, typically affected by noise.  
In the finite dimensional setting, let $X \in \bbR^{n\times p}$ represent the linear mapping and $\yzero\in\calY$ the noise-free observation. Then,
\begin{equation} 
\yzero = X \wsol,
\notag\end{equation}
Retrieving $\yzero$ is referred to as the {\it forward problem}. Due to imperfections in the observation process, the observed data is often corrupted by noise, leading to the following model:
\begin{equation} \label{eq:yd}
\yd = X \wsol +\varepsilon,
\end{equation}
where $\varepsilon$ is additive white Gaussian noise with noise level $\delta$. 

Inverse problems aim to recover or approximate $\wsol$ from the observation data. In the {\it noiseless} case,  the problem can be described as 
\begin{equation}\label{eq:feasibility}
\find w\in\calW \quad{\rm such~ that}\quad X w= \yzero.
\end{equation} 
However, the above problem is in general ill-posed mostly due to the poor conditioning or degeneracy of the forward operator $X$. 
The situation becomes even more challenging in the presence of noise.

\subsection{Regularization} 
The ground truth often exhibits certain low-complexity structural properties referred to as the model which can be explicitly promoted using a regularization function 
$R(w)$, also called regularizer.
Common structural priors include: sparsity, induced by the $\ell_1$-norm \cite{candes2006robust}; group sparsity, induced by the $\ell_{1,2}$-norm \cite{huang2010benefit}, gradient sparsity corresponding to the total variation~\cite{rudin1992nonlinear}) regularizer, and low rank favored by nuclear norm \cite{candes2012exact}. For a broader discussion, see \cite{benning2018modern}.
By incorporating regularization, problem \eqref{eq:feasibility} is reformulated as:
\begin{equation}\label{P}\tag{$P$}
\min_{w\in\calW} R(w) \quad{\rm such~that}\quad \calX w = \yzero.
\end{equation}
More generally, one may introduce a data-fit function $\ell:\calY\times \calY \rightarrow \bbR \cup \{ +\infty \}$, 
\begin{equation}    
\min_{w\in\calW} R(w)  \quad{\rm such~that}\quad  w\in\arg\min \big\{\ell(\calX w,\yzero)\big\}.\tag{$\Pzero$}\label{P0}
\end{equation}
Problem \eqref{P0} recovers \eqref{P} when $\ell$ is the indicator function of the set $\{ w\in\calW \mid Xw=\yzero\}$. 

In the noisy setting, one can consider the relaxed constraint
\begin{equation}    
\min_{w\in\calW} R(w)  \quad{\rm such~that}\quad  \ell(\calX w,\yd) \leq \delta , \tag{$\widehat{P}_{\delta}$}\label{Pcd}
\end{equation}
where $\delta$ is the noise level. However, this formulation depends on knowing or estimating the noise level 
$\delta$, which is often difficult. A more tractable approach incorporates the constraint into the objective with a weighting parameter 
$\lambda>0$, yielding the variational regularization model:
\begin{equation}
\min_{w\in\calW} \Big\{ \pldw:=R(w)+\frac{1}{\lambda}\ell(\calX w,\yd) \Big\} . \tag{$\Pld$}\label{Pld}
\end{equation}
The regularization parameter $\lambda$ can be selected via validation criteria such as the discrepancy principles \cite{morozov1966solution}, cross-validation  \cite{golub1997generalized} and SURE~\cite{stein1981estimation,deledalle2014stein}. Recently, data-driven approaches have also been designed to determine the proper regularization parameter~\cite{ArrMaa19,chirinos2024learning}. 
This approach is known as variational regularization. Once a formulation is chosen, an appropriate numerical scheme is applied and ran until convergence, often for different values of the parameter $\lambda$.

An alternative approach is {\it iterative regularization},  which achieves regularization through early stopping of a suitable numerical procedure. Specifically, for noisy data 
$\yd$, one may consider problem \eqref{P0} with 
$\yzero$ replaced by  $\yd$:
    \begin{equation}
    \min_{w\in\calW} R(w)\quad{\rm such~that}\quad w\in\arg\min\big\{\ell(\calX w,\yd)\big\}.\tag{$\Pd$}\label{Pd}
    \end{equation}
Assuming the constraint set is non-empty, this problem can be addressed with an iterative algorithm that generates a sequence $\{\wkd\}_{k\in\bbN}$. 
Since the constraint involves noisy data, the sequence may eventually overfit to the noise. Thus, a suitable stopping time  $\kd$ must be chosen to avoid overfitting. 
In this framework, regularization is achieved via early stopping.

In practice, stopping criteria are often selected using the same validation principles used in variational regularization. However, there are key differences: on the one side, the choice of $\ld$ in the variational regularization approach is generally computationally expensive while $\kd$ controls at the same time the accuracy of the solution and the computational cost. For large scale problems, iterative regularization is more efficient because it avoids solving various optimization problems. On the downside, theoretical analysis (e.g., convergence and rates) for iterative regularization is more complex, as it depends on the specific algorithm and the solution is not the result of a well-defined optimization problem.

\subsection{Model Consistency}

The premise of regularization is that the ground truth $\wsol$ possesses a low-complexity structure, often referred to as its ``model''. Such structure can often be described by a smooth manifold $\Mwsol$, such that $\wsol \in \Mwsol$. This includes commonly encountered low-complexity structures such as (group) sparsity, low rank, and analysis sparsity (e.g., piecewise constant signals).

At the same time, the outcome of the aforementioned regularization approaches is designed to promote solutions with similar low-complexity structures. 
Denote the output of the variational or iterative regularization method by $\wldkdd$. A natural question is whether $\wldkdd$ exhibits the same structure as 
$\wsol$, i.e., whether $\wldkdd \in \Mwsol$ also holds. This property is referred to as \textit{model consistency}.

Model consistency reflects the robustness of the solution with respect to perturbations. In the context of linear inverse problems and variational regularization, model consistency has been studied in~\cite{vaiter2015model,vaiter2017model,fadili2019model}. Specifically, when the data-fit function is quadratic, namely $\ell(\calX w, \yd) = \frac{1}{2} \| \calX w - \yd \|^2$, it is shown that if the noise in $\yd$ is sufficiently small, and the regularization parameter $\lambda$ is properly chosen, then the solution to \eqref{Pld} satisfies the model consistency property.
However, the model consistency of iterative regularization remains unaddressed in the existing literature. In this paper we do a first step in this direction.

\subsection{Related work} 
\paragraph{Iterative regularization}
The study of the iterative regularization properties of gradient descent, or the Landweber iteration, dates back to the 1950's. Classical results show that gradient descent applied to least-squares problems, when initialized at zero, converges to the minimal-norm solution, and that early stopping behaves similarly to Tikhonov regularization~\cite[Chapter 6]{engl1996regularization}. 
Similar results for stochastic gradient descent are provided in~\cite{jin2018regularizing}. 
For strongly convex regularization functions, iterative regularization of dual gradient descent, linearized Bregman iterations and mirror descent are studied in~\cite{burger2007error,rosasco2015learning,villa2023implicit}, with corresponding stability and convergence estimates established. 
For merely convex regularization, methods such as Bregman operator splitting, linearized/preconditioned ADMM, and primal-dual methods?including data-driven variants?have been analyzed in~\cite{molinari2021iterative,molinari2024iterative,vega2024fast}.
When using data-fit functions other than least squares, the aforementioned methods do not directly apply. One approach to handle this is the use of diagonal strategies~\cite{garrigos2018iterative,calatroni2021accelerated}, which combine an optimization algorithm with a sequence of approximations \(\ell_k\) to the original problem, updated at each iteration.

For only convex regularization, a technique known as exact regularization~\cite{friedlander2008exact,yin2010analysis} exists. This involves solving the problem
\begin{equation}\label{Ralpha}
\min_{w\in\bbR^p} \Big\{ \Ra (w):=R(w)+\frac{\alpha}{2} \|w\|^2 \Big\} \qstq\calX w=\yzero,
\end{equation} 
where there exists small enough $\alpha>0$ such that the minimizer of the perturbed problem coincides with that of the original problem~\eqref{P}. In this way, a strongly convex regularizer \(\Ra\) is obtained by adding a quadratic term.

\paragraph{Model consistency}
Using the framework of partial smoothness~\cite{lewis2002active,lewis2024identifiability}, it can be shown that, in variational regularization, the model can be identified when $\delta$ is sufficiently small and an appropriate $\ld$ is chosen~\cite{vaiter2015model,vaiter2017model,fadili2019model}. This implies  model consistency. 
A related topic is the analysis of local linear convergence of first-order optimization methods.  For relevant results involving first-order optimization algorithms, see for instance \cite{liang2014local,liang2017activity,molinari2019convergence,nutini2022let,klopfenstein2024local} and  references therein.

\subsection{Contribution of this work}

In this paper, motivated by recent developments regarding partial smoothness \cite{qin2025partial}, we study the model consistency property of iterative regularization. Under the assumption that the low-complexity  regularization function is partly smooth and strongly convex, we analyze problem~\eqref{Pd} solved by the dual gradient descent (DGD) method, as studied in~\cite{villa2023implicit}, and establish the following results:
\begin{itemize}
    \item We show that if the noise level is sufficiently small and an appropriate early stopping time is selected, the resulting solution from iterative regularization satisfies the model consistency property; see Theorem~\ref{thm:mc_DGD}~(i). Moreover, we prove that for a fixed, sufficiently small noise level, there exists an interval of stopping times over which model consistency holds; see Theorem~\ref{thm:mc_DGD}~(ii). We also extend this result to the accelerated dual gradient descent method.   
     
     \item We consider the continuous dynamical system corresponding to the chosen discrete algorithm \cite{ApiMol23}, and show that the model consistency property still holds in this setting.
     
    \item For all the points satisfying model consistency, we provide a finer local characterization of their distance, which is called regularization error, to the true solution. In Theorem \ref{thm:reg-error}, we show that the behavior of the error is controlled by a factor which is strictly smaller $1$ under a restricted injectivity condition; see \eqref{INJ}.
    
\end{itemize}
Finally, we present numerical results on $\ell_1$-norm, one dimensional total variation (1d-TV), $\ell_{1,2}$-norm, and nuclear norm. These experiments confirm our theoretical findings.

\paragraph{Paper Organization}
The remainder of the paper is organized as follows: Section~\ref{sec:mb} provides mathematical background, including basic concepts from convex analysis and the definition of partial smoothness. In Section~\ref{sec:ir}, we present the problem formulation along with the dual gradient descent (DGD) and accelerated dual gradient descent (ADGD) algorithms. Section~\ref{sec:mcir} contains the main theoretical analysis of model consistency for iterative regularization. Section~\ref{sec:llc} presents results on local analysis of the regularization error. Finally, Section~\ref{sec:ne} provides numerical examples demonstrating showing how model consistency and regularization error behave in practice.

\section{Mathematical background}\label{sec:mb}
In this section, we collect the necessary background material, that serves in our subsequent theoretical analysis. 

\subsection{Notation and definitions}

We refer the reader to~\cite{beck2017first,bauschke2017convex} for more details on the material presented below. 
Let $\calW$ be a Euclidean space equipped with the inner product $\langle \cdot, \cdot \rangle$ and the associated norm $\| \cdot \|$. 
We denote by $\Id$ the identity matrix. Given $w \in \calW$ and $\epsilon > 0$, let $\calB(w, \epsilon)$ be the open ball centered at $w$ with radius $\epsilon$.

Given a non-empty, closed, and convex set $S \subseteq \calW$, we denote by $\mathrm{int}(\cdot)$ and $\mathrm{bdy}(\cdot)$ the interior and boundary of a set, respectively.

The relative interior of $S$ is defined by 
$$
\ri(S)=\Ba{w\in S \mid \calB(w,\epsilon)\cap \aff S\subseteq S\ \text{for some}\ \epsilon>0 },
$$
where $\aff S$ is the smallest affine subspace containing  $S$. 
Given a point $w\in\calW$, the distance of $w$ from $S$ and the projection of $w$ onto $S$ are defined as follows
\[
{\rm dist}(w, S) = \inf_{w'\in S} \|w-w'\| 
\qandq
P_S(w)={\arg \min}_{w'\in S} ~ \|w-w'\| .
\]

We denote by $\lsc$ the set of proper, convex, and lower semicontinuous functions from $\calW$ to $\bbR\cup\{+\infty\}$. 
We say that $R\in\lsc)$ is $\sigma$-strongly convex if $R-\frac{\sigma}{2}\|\cdot\|^2$ is convex and $R$ is $L$-smooth if it is differentiable and $\nabla R$ is $L$-Lipschitz continuous. The subdifferential 
of $R\in\lsc$ at a point $w$  is defined as
$$\partial R(w)=\Big\{ u\in\calW \mid R(w')-R(w)-\langle u,w'-w\rangle\geq0,\forall w'\in\calW\Big\},$$
if $w\in\mathrm{dom}R$ and is the empty set otherwise. 
Any element in $\partial R(w)$ is called a subgradient of $R$ at $w$. The conjugate function $R^*$ of $R\in\lsc$ is defined by
$$R^*(u)=\sup_{w\in\calW}\big\{\langle u,w\rangle-R(w)\big\}.$$ 
If $R\in\lsc$, then $R^*\in\lsc$. 
Moreover, if $R\in\lsc$ is $\sigma$-strongly convex, with $\sigma>0$, the conjugate function $R^*$ is $\frac{1}{\sigma}$-smooth. 
Given $\gamma>0$, the proximity operator of $R$ is defined by
$$
\prox_{\gamma R}(w)
={\arg\min}_{w'\in\calW}\bBa{ R(w')+\qfrac{1}{2\gamma}\|w'-w\|^2 },
$$ and is firmly nonexpansive.
\subsection{Partial smoothness}
Given $S\subseteq\calW$ a closed convex set, the smallest linear subspace of $\calW$ that contains $S-x, \forall x\in S$ is denoted by 
$\operatorname{span}(S)$. 
Let $\calM\subset\bbR^p$ be a $C^2$-manifold around $w\in\bbR^p$, and denote $\calT_\calM(w)$ the tangent space of $\calM$ at $w$. 
For any vector $w\in\bbR^p$ with $\partial R(w) \neq \emptyset$, the model tangent subspace is defined by
$$
\Tw = \big(\operatorname{span}(\partial R(w))\big)^{\perp}.
$$

The following definition of partial smoothness is adapted from~\cite{lewis2002active} to the convex setting.
\begin{definition}[Partly smooth function]
Let $R\in\Gamma_0(\bbR^p)$ and $\wsol\in\bbR^p$ such that $\partial R(\wsol)\not= \emptyset$. $R$ is said to be partly smooth at $\wsol$ relative to a set $ \Msol\subset \bbR^p$ if 
\begin{itemize}
\item \textbf{(Smoothness)} $\Msol$ is a $C^2$ manifold around $\wsol$ and $R|_{\Msol}$ is $C^2$  near $\wsol$;
\item \textbf{(Sharpness)} The tangent space $\calT_\Msol(\wsol)$ coincides with the model tangent subspace $\Twsol$;
\item \textbf{(Continuity)} $\partial R$  is continuous near $\wsol$ along $\Msol$.  
\end{itemize}
\end{definition}

\begin{remark}
When $R$ is partly smooth at \( \wsol \) relative to a manifold \( \Msol \), the function, $\Ra =R(w)+\frac{\alpha}{2}\|w\|^2, \alpha>0$ is also partly smooth at $\wsol$ relative to the same manifold, according to the smooth perturbation rule for partial smoothness \cite[Corollary 4.7]{lewis2002active}. 
\end{remark}

In Table~\ref{table:examples-psf} below, we summarize several popular examples of partly smooth functions that are widely used in inverse problems.

\begin{table}[!htbp]
\caption{Examples of partly smooth functions: $\supp(w):=\{i:w_{i}\not=0\}$; $\DDIF$ stands for the finite differences operator; for $\ell_{1,2}$-norm, $w_{_\calI}$ is the restriction of $w$ to the entries indexed in $\calI$, $\cup_{i=1}^G \calI_i=\{1,\cdots,p\}$, $\supp_{_\calI}(w):=\{i:w_{_{\calI_i}}\not=0\}$.}
\label{table:examples-psf}
\begin{center}
\begin{tabular}{|c|c|c|}
\hline
Function &Expression& Partial smooth manifold\\
\hline
$\ell_1$-norm&$\sum_{i=1}^p|w_{i}|$ & $\Msol=\Twsol=\{w\in\bbR^p \mid \supp(w)\subset \supp(\wsol)\}$\\
\hline
TV semi-norm&$\|\DDIF w\|_1$& $\Msol=\Twsol=\{w\in\bbR^p \mid \supp(\DDIF w)\subset \supp(\DDIF\wsol)\}$\\
\hline
$\ell_{1,2}$-norm&$\sum_{i=1}^m\|w_{_{\calI_i}}\|$ & $\Msol=\Twsol=\{w\in\bbR^p \mid \supp_{_{\calI}}(w)\subset \supp_{_{\calI}}(\wsol)\}$\\
\hline
Nuclear norm &$\sum_{i=1}^r \sigma_i(w)$& $\Msol=\{w\in \bbR^{n\times p} \mid \rank(w)=\rank(\wsol)\}$\\
\hline
\end{tabular}
\end{center}
\footnotesize
Remark: For the first three partly smooth functions the manifold $\Msol$ is affine, while for the nuclear norm it is not. Moreover, since the first two are polyhedral functions, the subdifferential is locally constant around $\wsol$ along $\wsol+\Twsol$.
\end{table}

\paragraph{Finite identification}
A crucial property of partial smoothness is the \emph{identifiability} of the smooth manifold. This means that if there exist sequences \( w^{(k)} \) and \( z^{(k)} \in \partial R(w^{(k)}) \) such that
\[
\operatorname{dist}(z^\dagger, \partial R(w^{(k)})) \to 0 \quad \text{for some } z^\dagger \in \operatorname{ri}(\partial R(w^\dagger)),
\]
then \( w^{(k)} \in \mathcal{M}_{w^\dagger} \) for all sufficiently large \( k \). This property is called \emph{finite identification}~\cite{hare2004identifying}.  
Below, we provide a recent generalization of this result from~\cite{hare2004identifying}, which does not require convergence of the sequence.

\begin{lemma}[{\cite[Proposition 4, Theorem 4]{qin2025partial}}]\label{lem:1}
Let  $R \in \lsc$ be  partly smooth at $\wsol$ relative to a $C^2$-manifold $\Msol \subset \bbR^p$. For any $\epsilon >0$, define the local union as
\[
\calU_{\epsilon} = {\mathsmaller \bigcup}_{w\in \Msol\cap \calB(\wsol,\epsilon)} \big( w + \partial R(w) \big)  .
\]
Then  ${\mathrm{span}}{(\calU_{\epsilon})}=\bbR^p$, and 
\begin{enumerate}
\item If $z^\dagger \in \ri(\partial R(\wsol))$, then $\wsol + z^\dagger \in {\rm int}\Pa{\calU_{\epsilon}} $. 

\item If there is a sequence $\{(w^{(k)}, z^{(k)})\}_{k\in\bbN}$ such that $\zk\in\partial R(\wk)$ and 
\[
\limsup_{k\to\infty} \| (w^{(k)} + z^{(k)}) - (\wsol + z^\dagger) \| < {\rm dist} \Pa{\wsol +  z^\dagger, ~ \bdy(\calU_{\epsilon})}  ,
\]
then for all $k$ large enough we have $\wk \in \Msol$. 
\end{enumerate}
\end{lemma}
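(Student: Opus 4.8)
The linchpin is to recognise $\calU_{\epsilon}$ as a preimage under the proximity operator. Since $R\in\lsc$, the resolvent $(\Id+\partial R)^{-1}=\prox_{R}$ is single-valued and defined on all of $\calW$ (indeed firmly nonexpansive). Hence for every $u\in\calW$ there is exactly one pair $(w,z)$ with $z\in\partial R(w)$ and $w+z=u$, namely $w=\prox_{R}(u)$ and $z=u-w$. Consequently
\[
\calU_{\epsilon}=\prox_{R}^{-1}\big(\Msol\cap\calB(\wsol,\epsilon)\big),
\]
and, crucially, the representation $u=w+z$ with $z\in\partial R(w)$ is \emph{unique}. This observation alone delivers the sequential identification claim, because it pins down which $w$ generates any given $u\in\calU_{\epsilon}$.

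The heart of the proof is the interior claim: under the nondegeneracy $\zsol\in\ri(\partial R(\wsol))$, the point $u^{\dagger}:=\wsol+\zsol$ lies in $\mathrm{int}(\calU_{\epsilon})$. Equivalently, I must establish the \emph{robust identification} property that $\prox_{R}$ maps a whole $\bbR^{p}$-neighbourhood of $u^{\dagger}$ into $\Msol\cap\calB(\wsol,\epsilon)$. To this end I would study the \emph{restricted} proximal problem $\min_{w\in\Msol}\,R(w)+\tfrac12\|w-u\|^{2}$. Because $R|_{\Msol}$ is $C^{2}$ and the quadratic term supplies strong convexity, its first-order optimality condition on the manifold, $P_{\Twsol}(u-w)=\nabla_{\Msol}R(w)$, together with the induced second-order sufficient condition lets the implicit function theorem produce a $C^{1}$ solution map $u\mapsto\bar w(u)\in\Msol$ near $u^{\dagger}$, with $\bar w(u^{\dagger})=\wsol$ and, by continuity, $\bar w(u)\in\calB(\wsol,\epsilon)$ after shrinking the neighbourhood. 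By construction the tangential part of the inclusion $u-\bar w(u)\in\partial R(\bar w(u))$ holds; for the normal part, at $u^{\dagger}$ we have $u^{\dagger}-\wsol=\zsol\in\ri(\partial R(\wsol))$, and the continuity of $\partial R$ along $\Msol$ combined with the openness of the relative interior keeps $u-\bar w(u)$ inside $\partial R(\bar w(u))$ for all $u$ near $u^{\dagger}$. Hence $\prox_{R}(u)=\bar w(u)\in\Msol$ on a neighbourhood of $u^{\dagger}$, i.e. $u^{\dagger}\in\mathrm{int}(\calU_{\epsilon})$. This robust, neighbourhood-level identification — as opposed to the sequential version of~\cite{hare2004identifying} — is the main obstacle: one must secure the invertibility in the implicit function theorem (the second-order sufficient condition on the $C^{2}$ manifold) and verify that the normal component does not escape the relative interior under perturbation, which is precisely where both $\zsol\in\ri(\partial R(\wsol))$ and the continuity axiom of partial smoothness are used.

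The span claim then follows at once: having nonempty interior, $\calU_{\epsilon}$ is full-dimensional, so $\mathrm{span}(\calU_{\epsilon})=\bbR^{p}$. For the sequential identification claim, put $d:=\dist(u^{\dagger},\bdy(\calU_{\epsilon}))$, which is strictly positive by the interior claim. The open ball $\calB(u^{\dagger},d)$ is connected, contains the interior point $u^{\dagger}$, and is disjoint from $\bdy(\calU_{\epsilon})$; since $\mathrm{int}(\calU_{\epsilon})$ and the exterior of $\calU_{\epsilon}$ are disjoint open sets covering $\calB(u^{\dagger},d)$, the whole ball lies in $\mathrm{int}(\calU_{\epsilon})\subseteq\calU_{\epsilon}$. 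The hypothesis $\limsup_{k\to\infty}\|(\wk+\zk)-u^{\dagger}\|<d$ gives $\wk+\zk\in\calB(u^{\dagger},d)\subseteq\calU_{\epsilon}$ for all large $k$, whence $\prox_{R}(\wk+\zk)\in\Msol$; but $\zk\in\partial R(\wk)$ forces $\prox_{R}(\wk+\zk)=\wk$ by the uniqueness established in the first step, so $\wk\in\Msol$ for all large $k$, as claimed.
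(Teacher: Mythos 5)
This lemma is not proved in the paper at all: it is imported from \cite{qin2025partial} (Proposition 4 and Theorem 4), with only a remark explaining why the smallness restriction on $\epsilon$ can be dropped in the convex setting. So there is no in-paper argument to compare your proof against, and I can only assess it on its own terms. Your overall architecture is sound and matches the standard identification strategy: the observation that $\calU_{\epsilon}=\prox_{R}^{-1}\big(\Msol\cap\calB(\wsol,\epsilon)\big)$, with the decomposition $u=\prox_R(u)+(u-\prox_R(u))$ unique by Minty's theorem, correctly reduces everything to the interior claim; the span claim then follows because $\ri(\partial R(\wsol))\neq\emptyset$ whenever $\partial R(\wsol)\neq\emptyset$, so item (i) applies to some $z^\dagger$ and $\calU_\epsilon$ has nonempty interior; and your connectedness argument for item (ii) is correct.

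The gap is in the ``normal part'' of the interior claim, which you dispatch in one sentence. Two things are used there without justification. First, that the first-order optimality condition of the restricted problem places $u-\bar w(u)$ in $\mathrm{aff}\big(\partial R(\bar w(u))\big)$; this requires the identity $P_{T_w}\partial R(w)=\{\nabla_{\Msol}R(w)\}$ for $w\in\Msol$ near $\wsol$, a consequence of sharpness that should be stated. Second, and more seriously: from $v(u)\in\mathrm{aff}\big(\partial R(\bar w(u))\big)$, $v(u)\to\zsol\in\ri(\partial R(\wsol))$, and continuity of $\partial R$ along $\Msol$, you conclude $v(u)\in\partial R(\bar w(u))$ for $u$ near $u^\dagger$. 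This is not automatic: continuity of a set-valued map does not by itself prevent a point converging to the relative interior of the limit set from escaping the nearby sets. One needs, in addition, that the affine hulls $\mathrm{aff}(\partial R(w))$ have locally constant dimension (which sharpness supplies, since $\dim T_w=\dim\Msol$ along $\Msol$ near $\wsol$), and then a quantitative lemma of the form: if $C_k\to C$, $\dim\mathrm{aff}(C_k)=\dim\mathrm{aff}(C)$, $x_k\in\mathrm{aff}(C_k)$ and $x_k\to x\in\ri(C)$, then $x_k\in\ri(C_k)$ eventually. That lemma is precisely the technical core of the cited Proposition 4 of \cite{qin2025partial} (and of the earlier Hare--Lewis identification results); it is true, but it is the one step that cannot be waved through. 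The remainder of your argument, including the implicit-function construction of $\bar w(u)$ (whose second-order condition follows from the ambient $1$-strong convexity of $R+\frac{1}{2}\|\cdot-u\|^2$ forcing quadratic growth along $\Msol$), is fine.
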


\begin{remark}
Lemma \ref{lem:1} is a summary of the result of \cite[Proposition 4 and Theorem 4]{qin2025partial}. Note that in \cite{qin2025partial}, the value of $\epsilon$ needs to be small enough, while in our case we do not have this restriction, the reason is that we are in the convex setting while \cite{qin2025partial} is not. However, $\epsilon$  may affect the value of ${\rm dist} \Pa{\wsol +  z^\dagger, ~ \bdy(\calU_{\epsilon})}$ when it is too small, and we refer to \cite[Figure 1]{qin2025partial} for illustration. 
\end{remark}

\paragraph{Riemannian gradient and Hessian}

When $\Msol$ is affine, according to \cite{liang2017activity} the Riemannian gradient and Hessian of $R$ at $w\in\Mwsol$ read
\begin{equation}\label{eq:riemannain_gradient_hessian}
    \nabla_{\Mwsol} R(w) = \PTwsol \partial \widetilde{R}(w) 
    \qandq
    \nabla^2_{\Mwsol} R(w)  = \PTwsol \nabla^2 \widetilde{R}(w) \PTwsol ,
\end{equation}
where $\widetilde{R}$ is a $C^2$-smooth representation of $R$ along $\Mwsol$, i.e. a $C^2$-smooth function $\widetilde{R}$ on $\bbR^n$ that agrees with $R$ over $\Mwsol$. 

We have the following Riemannian Taylor expansion which is a simplified version of \cite[Lemma B.2]{liang2017activity}. 

\begin{lemma}\label{lem:taylor}
Let $\Mwsol$ be affine and two points $w_1, w_2 \in \Mwsol$ which are close, the Riemannian Taylor expansion of $R\in C^2(\Mwsol)$ around $w_1$ reads
\[
\nabla_{\Mwsol} R(w_2)
= \nabla_{\Mwsol} R(w_1) + \nabla^2_{\Msol} R(w_1) (w_2 - w_1) + o(\norm{w_2-w_1}) .
\]
\end{lemma}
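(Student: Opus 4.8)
The plan is to reduce the statement to an ordinary (Euclidean) first-order Taylor expansion of a smooth representative, exploiting that the affineness of $\Mwsol$ makes the projection $\PTwsol$ a fixed linear operator and the tangent space $\Twsol$ constant along the manifold. First I would fix a $C^2$-smooth representative $\widetilde{R}$ of $R$ along $\Mwsol$ and invoke the representation \eqref{eq:riemannain_gradient_hessian}, which for affine $\Mwsol$ reads $\nabla_{\Mwsol} R(w) = \PTwsol \nabla \widetilde{R}(w)$ and $\nabla^2_{\Mwsol} R(w) = \PTwsol \nabla^2 \widetilde{R}(w) \PTwsol$, with $\PTwsol$ the (constant) orthogonal projection onto $\Twsol$ (here $\partial\widetilde{R} = \nabla\widetilde{R}$ since $\widetilde{R}$ is smooth).

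Next, since $\widetilde{R}\in C^2$, its gradient $\nabla\widetilde{R}$ is $C^1$ and admits the classical Taylor expansion around $w_1$,
\[
\nabla \widetilde{R}(w_2) = \nabla \widetilde{R}(w_1) + \nabla^2 \widetilde{R}(w_1)(w_2 - w_1) + o(\norm{w_2 - w_1}).
\]
Applying the bounded linear map $\PTwsol$ to both sides preserves the remainder order, because $\norm{\PTwsol r} \le \norm{r}$ gives $\PTwsol\, o(\norm{w_2-w_1}) = o(\norm{w_2-w_1})$; this yields an expansion for $\nabla_{\Mwsol} R(w_2) = \PTwsol \nabla \widetilde{R}(w_2)$ in terms of $\nabla_{\Mwsol} R(w_1)$ and the linear term $\PTwsol \nabla^2 \widetilde{R}(w_1)(w_2-w_1)$.

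The decisive step is to turn this linear term into the Riemannian Hessian term $\nabla^2_{\Mwsol} R(w_1)(w_2 - w_1) = \PTwsol \nabla^2 \widetilde{R}(w_1)\PTwsol(w_2 - w_1)$. Here I would use that $\Mwsol = \wsol + \Twsol$ is affine, so the displacement satisfies $w_2 - w_1 \in \Twsol$ and hence $\PTwsol(w_2 - w_1) = w_2 - w_1$. Inserting this identity makes the two linear terms coincide, and combining with the previous step delivers the claim.

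The main obstacle is precisely this last identification, as it is exactly where the affine hypothesis is essential: for a general curved $C^2$-manifold the displacement $w_2 - w_1$ need not lie in a single tangent space, the projection $\PTwsol$ varies along $\Mwsol$, and the intrinsic Hessian differs from the naively projected one by curvature (second fundamental form) contributions, so the clean expansion above would acquire correction terms. One should also note that the statement is independent of the chosen representative $\widetilde{R}$, which is automatic since $\nabla_{\Mwsol} R$ and $\nabla^2_{\Mwsol} R$ are intrinsic objects; alternatively, as the result is a simplification of \cite[Lemma B.2]{liang2017activity}, one may invoke that lemma directly and specialize to the affine case.
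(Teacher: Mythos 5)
Your proof is correct and follows exactly the route the paper intends: the paper gives no proof of Lemma~\ref{lem:taylor} (it defers to \cite[Lemma B.2]{liang2017activity}), but the representation \eqref{eq:riemannain_gradient_hessian} together with the Euclidean Taylor expansion of $\nabla\widetilde{R}$ and the observation that $w_2-w_1\in\Twsol$ (so $\PTwsol(w_2-w_1)=w_2-w_1$) is precisely the intended argument in the affine case. Your remarks on where affineness is used and on independence of the representative are accurate and complete the picture.
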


\section{Iterative Regularization}\label{sec:ir}

In this part, we briefly recall the iteration regularization of dual gradient descent method and accelerated dual gradient descent method studied in \cite{garrigos2018iterative,villa2023implicit}. 
For problem \eqref{Pd}, suppose $\yd$ belongs to the range of $\calX$, then we can specify the constraint as $\calX w = \yd$. Regarding the regularization, strongly convex functions is considered as in \eqref{Ralpha}, leading to the following optimization problem
\begin{equation}
\min_{w\in\bbR^p} \Ra (w) \qstq \calX w = \yd .
\tag{$\overline{P_\delta}$}\label{Pdelta0}   
\end{equation}

\paragraph{Dual problem}
By Fenchel-Rockafellar duality, the dual problem of \eqref{Pdelta0} is
\begin{equation}
\min_{v\in\bbR^n} \bBa{ \phi(v) := \Ra ^*(-X^{\top}v)+\langle \yd,v\rangle } , \tag{$\overline{D_\delta}$}\label{Ddelta0}
\end{equation}
where $\Ra ^*(-X^{\top}v)$ is the conjugate of $\Ra$, which is given by the Moreau envelope of $R^*$ \cite[Proposition 14.1]{bauschke2017convex}
$$ 
\Ra ^*(\cdot)= \inf_{w} \Ba{ R(w)+\frac{1}{2\alpha}\|w- {\cdot}\|^2 }.
$$
By \cite[Proposition 12.30]{bauschke2017convex} $\Ra^*$ is differentiable  with  $\nabla \Ra ^*=\alpha^{-1} (\Id-\prox_{\alpha R^*}),
$
so that, thanks to the Moreau identity \cite[Theorem 14.3(ii)]{bauschke2017convex}  we have
\begin{equation}
\nabla \Ra ^*=\prox_{\alpha^{-1}R} (\alpha^{-1}x)
\notag\end{equation}
Moreover, since $\nabla \phi(v) = - X \prox_{\alpha^{-1}R} \Pa{ - \alpha^{-1} X^{\top}v } + \yd$, $\nabla\phi$ is $\frac{\norm{X}^2}{\alpha}$-Lipschitz continuous. 
Hence we can apply gradient descent method to solve \eqref{Ddelta0}, as considered in \cite{villa2023implicit}.

\paragraph{Dual gradient descent}

The dual gradient descent (DGD) method for solving (\ref{Pdelta0}) is described below.

\begin{algorithm}
\caption{Dual gradient descent (DGD)}
\label{alg:DGD}
\begin{algorithmic}[1]
\Require $\vzerod=0\in\bbR^n,\gamma=\alpha\|X\|^{-2},X,\yd,R,\alpha$
\Ensure $\{\wkd\}$
\For{$k = 0,1,2,\dotsm$}
\State $\zkd= - X^{\top} \vkd $ \vspace{1.5mm}
\State $\wkd=\prox_{\alpha^{-1}R}\Pa{ \alpha^{-1} \zkd } $ \vspace{1.5mm}
\State $\vkpd=\vkd+\gamma(X\wkd-\yd)$
\EndFor
\end{algorithmic}
\end{algorithm}

The iterate $\zkd$ is a subgradient of $\Ra$ at $\wkd$, as from the optimality condition of proximal operator we have
\[
\qfrac{1}{\alpha} \zkd - \wkd \in \qfrac{1}{\alpha} \partial R(\wkd) 
\quad\Longleftrightarrow\quad
\zkd \in \partial \Ra(\wkd) .
\]
The iteration can be written only in terms of $\vkd$ as
\begin{equation}\label{eq:dgd-vkd}
\vkpd = \vkd - \gamma \nabla \phi(\vkd)  ,
\end{equation}
which is a gradient descent step for the dual problem \eqref{Ddelta0}. According to \cite{bauschke2017convex,liang2016convergence}, we have the following convergence result.

\begin{lemma}\label{lem:convergence-vk}
For the DGD Algorithm~\ref{alg:DGD}, there exists a $\vdsol \in {\rm arg\min}(\phi)$ such that $\vkd \to \vdsol$ as $k\to\infty$. Moreover, there exists a constant $C>0$ 
\[
\norm{\vkd-\vkmd} \leq \qfrac{C}{\sqrt{k}} .
\]
\end{lemma}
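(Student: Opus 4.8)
The plan is to recognize the recursion \eqref{eq:dgd-vkd} as the fixed-point iteration $\vkpd = T\vkd$ of the gradient-step operator $T := \Id - \gamma\nabla\phi$, and to exploit that, with the prescribed step $\gamma = \alpha\norm{X}^{-2} = 1/L$ (where $L = \norm{X}^2/\alpha$ is the Lipschitz constant of $\nabla\phi$), the operator $T$ is \emph{firmly} nonexpansive. First I would verify that $\arg\min(\phi)$ is nonempty: since $\yd$ lies in the range of $X$ the primal problem \eqref{Pdelta0} is feasible, and as $\Ra$ is strongly convex, Fenchel--Rockafellar duality yields strong duality together with dual attainment, so $\arg\min(\phi)\neq\emptyset$. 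Because $\nabla\phi$ is continuous, the set of fixed points of $T$ coincides with the set of zeros of $\nabla\phi$, i.e.\ with $\arg\min(\phi)$.

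Next comes the key structural fact. By the Baillon--Haddad theorem, convexity of $\phi$ together with $L$-Lipschitz continuity of $\nabla\phi$ imply that $\nabla\phi$ is $\frac{1}{L}$-cocoercive. Expanding $\norm{Tv - Tv'}^2$ and inserting cocoercivity with $\gamma = 1/L$ gives
\[
\norm{Tv - Tv'}^2 \leq \norm{v - v'}^2 - \norm{(\Id-T)v - (\Id-T)v'}^2 ,
\]
which is exactly firm nonexpansiveness of $T$. In particular $T$ is nonexpansive, and the convergence $\vkd \to \vdsol \in \arg\min(\phi)$ in the first claim follows from the standard convergence theorem for Krasnosel'skii--Mann iterations of averaged operators (Opial's lemma), as in \cite{bauschke2017convex,liang2016convergence}.

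For the rate, I would combine two consequences of (firm) nonexpansiveness. Setting $a_k := \norm{\vkpd - \vkd}$, nonexpansiveness of $T$ gives monotonicity, $a_k = \norm{T\vkd - T\vkmd} \leq \norm{\vkd - \vkmd} = a_{k-1}$, so $(a_k)_k$ is non-increasing. Applying the firmly nonexpansive inequality with $v' = \vdsol$ (a fixed point, so $(\Id-T)\vdsol = 0$) yields the Fej\'er-type estimate $\norm{\vkpd - \vdsol}^2 \leq \norm{\vkd - \vdsol}^2 - a_k^2$; telescoping over $k$ produces $\sum_{j=0}^{k-1} a_j^2 \leq \norm{\vzerod - \vdsol}^2$. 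Bounding each summand below by $a_{k-1}^2$ via monotonicity then gives $k\, a_{k-1}^2 \leq \norm{\vzerod - \vdsol}^2$, whence
\[
\norm{\vkd - \vkmd} = a_{k-1} \leq \qfrac{\norm{\vzerod - \vdsol}}{\sqrt{k}} ,
\]
which is the claim with $C = \norm{\vzerod - \vdsol} = \norm{\vdsol}$ (recall $\vzerod = 0$).

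The algebraic expansion establishing firm nonexpansiveness and the telescoping are routine. The only genuine points requiring care are the nonemptiness of $\arg\min(\phi)$, for which strong convexity of $\Ra$ and feasibility of the primal are essential, and the fact that the prescribed step $\gamma = 1/L$ is precisely what upgrades the mere averagedness of $T$ to the firm nonexpansiveness used both in Opial's argument and in the Fej\'er inequality; with a generic $\gamma\in(0,2/L)$ one would still obtain convergence but the constant in the telescoping bound would change. Beyond this, the argument is the classical convergence analysis of gradient descent for a convex function with Lipschitz gradient.
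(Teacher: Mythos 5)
Your proof is correct and follows exactly the standard averaged-operator argument that the paper itself relies on: the paper states this lemma without proof, citing \cite{bauschke2017convex,liang2016convergence}, and those references establish it precisely via firm nonexpansiveness of the gradient step at $\gamma = 1/L$, Fej\'er monotonicity, and the telescoping-plus-monotonicity bound $k\,a_{k-1}^2 \leq \sum_{j<k} a_j^2 \leq \norm{\vzerod-\vdsol}^2$ that you use. The identification of the explicit constant $C=\norm{\vdsol}$ (using $\vzerod=0$) and the remark on dual attainment via feasibility and strong convexity are both sound and consistent with the paper's standing assumptions.
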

In iteration regularization, we are interested in the relation between the early stopping point of $\wkd$ and the ground truth $\wsol$. We impose the following assumption and notion
\begin{itemize}
    \item The ground truth $\wsol$ is the unique solution of \eqref{Pdelta0} for $\delta=0$.

    \item The early stopping time is denoted as $\kd$, and the corresponding point is $\wkdd$. 
\end{itemize}
From \cite{villa2023implicit}, we have the following result. 

\begin{lemma}[{\cite[Theorem 4.1]{villa2023implicit}}]\label{lem:DGD}
Let $\delta\in]0, 1[$, and consider the DGD Algorithm~\ref{alg:DGD}. 
Suppose there exists $\bar{v} \in \bbR^n$ such that $-\calX^\top \bar{v} \in \partial \Ra(\wsol)$. 
Set $ a=2\|X\|^{-1}$ and $b = \|X\|\|\vsol\|\alpha^{-1}$. 
Given any $c\geq \delta$ and $k_{\delta}\in\big\{\lfloor c\delta^{-1}\rfloor,\cdots,2\lfloor c\delta^{-1}\rfloor\big\}$, there holds 
$$
\|\wkdd-\wsol\|
\leq \Pa{ a(c^{1/2}+1)+bc^{-1/2} }\delta^{1/2},
$$
Let $\cd>0$ be such that $\kd=\cd\delta^{-1}$.
\end{lemma}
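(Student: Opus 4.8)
The plan is to use the $\alpha$-strong convexity of $\Ra$ to bound the squared error by a Bregman divergence, and then to split that divergence into a \emph{bias} term governed by the number of iterations and a \emph{noise} term governed by $\varepsilon:=\yd-\yzero$, with $\norm{\varepsilon}\le\delta$. Write $\vsol$ for the certificate $\bar v$, so $-X^\top\vsol\in\partial\Ra(\wsol)$ and $\wsol=\nabla\Ra^*(-X^\top\vsol)$; since $X\wsol=\yzero$, one checks $\nabla\phi_0(\vsol)=-X\wsol+\yzero=0$, so $\vsol$ minimizes the noiseless dual $\phi_0(v):=\Ra^*(-X^\top v)+\iprod{\yzero}{v}$, which differs from $\phi$ only through the linear term $\iprod{\varepsilon}{\cdot}$. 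As $\zkd=-X^\top\vkd\in\partial\Ra(\wkd)$, strong convexity gives
\[
\tfrac{\alpha}{2}\norm{\wkd-\wsol}^2 \le D := \Ra(\wsol)-\Ra(\wkd)-\iprod{\zkd}{\wsol-\wkd}.
\]

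Next I would rewrite $D$ purely in dual terms. Applying the Fenchel--Young equality $\Ra^*(-X^\top v)=-\iprod{v}{Xw(v)}-\Ra(w(v))$ with $w(v):=\nabla\Ra^*(-X^\top v)$, using $X\wsol=\yzero$, and invoking strong duality $\Ra(\wsol)=-\phi_0(\vsol)$ together with $\phi_0=\phi-\iprod{\varepsilon}{\cdot}$, the telescoping inner products collapse and one obtains the identity
\[
D = \Pa{\phi(\vkd)-\phi(\vsol)} + \iprod{\varepsilon}{\vsol-\vkd}.
\]
The first bracket is the bias. Recalling from \eqref{eq:dgd-vkd} that $\vkpd=\vkd-\gamma\nabla\phi(\vkd)$ is gradient descent on the $\tfrac{\norm{X}^2}{\alpha}$-smooth convex $\phi$ with $\gamma=\alpha\norm{X}^{-2}=1/L$ and $\vzerod=0$, the standard descent estimate with comparison point $\vsol$ (which need not minimize $\phi$) yields $\phi(\vkd)-\phi(\vsol)\le\frac{\norm{\vsol}^2}{2\gamma k}$. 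The second term is the variance, bounded by Cauchy--Schwarz by $\delta\,\norm{\vsol-\vkd}$.

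It remains to control $\norm{\vsol-\vkd}$, which I would do by a stability comparison between the noisy and noiseless runs. The two gradient maps $T_\delta,T_0$ attached to $\phi,\phi_0$ are nonexpansive (as $\gamma\le1/L$) and satisfy $T_\delta(v)-T_0(v)=-\gamma\varepsilon$ for every $v$; since both runs start at $\vzerod=0$, induction gives $\norm{\vkd-\vkzero}\le k\gamma\delta$, while Fej\'er monotonicity of the noiseless iterates toward the minimizer $\vsol$ of $\phi_0$ gives $\norm{\vkzero-\vsol}\le\norm{\vsol}$. Hence $\norm{\vsol-\vkd}\le\norm{\vsol}+k\gamma\delta$, and collecting the three bounds,
\[
\tfrac{\alpha}{2}\norm{\wkd-\wsol}^2 \le \frac{\norm{\vsol}^2}{2\gamma k}+\delta\norm{\vsol}+k\gamma\delta^2 .
\]
Substituting $\gamma=\alpha\norm{X}^{-2}$ and the definitions of $a,b$ turns this into $\norm{\wkd-\wsol}^2\le \frac{b^2}{k}+ab\delta+\frac{2k\delta^2}{\norm{X}^2}$. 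Taking $k=\kd$ in the prescribed window $\kd\asymp c\delta^{-1}$ makes the bias term $O(b^2\delta/c)$ and the noise term $O(a^2 c\delta)$; a short computation comparing this squared bound with the square of $\big(a(c^{1/2}+1)+bc^{-1/2}\big)\delta^{1/2}$, using $c\ge\delta$ and the width of the window to absorb the floor function into the constants and the ``$+1$'', gives the claim. Setting $\cd:=\kd\delta$ records $\kd=\cd\delta^{-1}$.

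The delicate point, and the reason the whole scheme works, is obtaining the correct noise rate. A naive primal split $\norm{\wkd-\wsol}\le\norm{\wkd-\wkzero}+\norm{\wkzero-\wsol}$ with the $\tfrac1\alpha$-Lipschitz map $\nabla\Ra^*(-X^\top\cdot)$ only gives a noise term of order $\norm{X}\alpha^{-1}k\gamma\delta=k\delta\norm{X}^{-1}$, which at $k\asymp c\delta^{-1}$ is a constant and would destroy consistency. It is exactly strong convexity---letting us bound the \emph{squared} error by a divergence that is only \emph{linear} in the accumulated noise $k\gamma\delta^2$---that converts this into the $\sqrt{k}\,\delta$ behaviour and hence the $\delta^{1/2}$ rate after balancing bias against variance at $\kd\asymp c\delta^{-1}$. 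Establishing the clean dual identity for $D$ and the nonexpansive-plus-constant-drift stability bound is where the real work lies; the remaining algebra is routine.
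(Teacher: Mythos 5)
This lemma is not proved in the paper: it is imported verbatim from \cite[Theorem 4.1]{villa2023implicit}, so there is no in-paper proof to compare against. Your reconstruction follows what is, as far as I can tell, the standard argument of that reference (same authors' line of work): strong convexity of $\Ra$ bounds $\tfrac{\alpha}{2}\norm{\wkd-\wsol}^2$ by the noiseless dual gap $\phi_0(\vkd)-\phi_0(\vsol)$, which you correctly split into the bias $\phi(\vkd)-\phi(\vsol)\le \norm{\vsol}^2/(2\gamma k)$ (the telescoping GD estimate is indeed valid for an arbitrary comparison point) and the noise term $\iprod{\varepsilon}{\vsol-\vkd}$, controlled via the drift bound $\norm{\vkd-\vkzero}\le k\gamma\delta$ (the same estimate the paper itself rederives in \eqref{eq:vkd-vkzero}) together with Fej\'er monotonicity of the noiseless dual iterates. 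Your closing remark about why the naive primal split $\norm{\wkd-\wkzero}+\norm{\wkzero-\wsol}$ fails to give the $\delta^{1/2}$ rate correctly identifies the crux of the result. The one soft spot is the final bookkeeping: squaring your bound $\tfrac{b^2}{k}+ab\delta+\tfrac{a^2}{2}k\delta^2$ and comparing with $\big(a(c^{1/2}+1)+bc^{-1/2}\big)^2\delta$ does not go through uniformly over the window $k_\delta\in\{\lfloor c\delta^{-1}\rfloor,\dots,2\lfloor c\delta^{-1}\rfloor\}$ --- at the lower end $\lfloor c\delta^{-1}\rfloor$ may fall below $c\delta^{-1}$, so $b^2/k$ can exceed $b^2\delta/c$ by an amount that the cross terms only absorb if $b$ is comparable to $a$, which is not guaranteed. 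This affects only the constants, not the $\delta^{1/2}$ rate or the structure $ac^{1/2}+a+bc^{-1/2}$ of the prefactor, so I regard it as a cosmetic mismatch with the quoted statement rather than a gap in the argument.
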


The above stability result implies that the early stopping point $\wkdd$ converges to $\wsol$ as $\delta\to 0$ at the rate of $\delta^{1/2}$. Moreover, the amount of computations is related to the noise level, in particular $\kd\to\infty$ as $\delta \to 0$. 

In \cite{villa2023implicit}, they also consider an accelerate version of DGD, denoted as ADGD, namely Nesterov's accelerated gradient method \cite{nesterov83} applied to DGD; see the algorithm below.

\begin{algorithm}[H]
\caption{Accelerated dual gradient descent (ADGD)}\label{alg:ADGD}
\begin{algorithmic}[1]
\Require $\vzerod=\umd=\uzerod=0\in\bbR^n,\gamma=\alpha\|X\|^{-2}$, $X,\yd,R,\alpha$ and $\theta>2$ 
\Ensure $\{\wkd\}$
\For{$k = 0,1,2,\dotsm$}
    \State $\rkd=\prox_{\alpha^{-1}R}(-\alpha^{-1}X^{\top} \vkd)$   	\vspace{1.5mm}
    \State $\ukd=\vkd+\gamma(X\rkd-\yd)$								\vspace{1.5mm}
    \State { $\vkpd=\ukd+\sfrac{k-1}{k+\theta}(\ukd-\ukmd)$	}\vspace{1.5mm}
    \State {$\wkd=\prox_{\alpha^{-1}R}(-\alpha^{-1}X^{\top}\ukd)$}
\EndFor
\end{algorithmic}
\end{algorithm}

The above iteration is slightly different from the scheme considered in \cite{villa2023implicit} in terms of the inertial parameter $\sfrac{k-1}{k+\theta}$. With this choice we still have $O(1/k^2)$ convergence rate for the dual objective, dual sequence \cite{chambolle2015convergence}. For the original  (FISTA like) choice of the parameters, one could  consider the modified accelerated scheme in \cite{liang2022improving}. 

\begin{lemma}\label{lem:convergence-adgd}
    For the ADGD Algorithm~\ref{alg:ADGD}, let $\theta>2$, then there exists a $\vdsol \in {\rm arg\min}(\phi)$ such that $\ukd \to \vdsol$ as $k\to\infty$. Moreover, 
    \[
    \sum_{k=1}^{\infty} k \norm{\ukd-\ukmd}^2 < +\infty  .
    \]
\end{lemma}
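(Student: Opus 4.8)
The plan is to identify Algorithm~\ref{alg:ADGD} with Nesterov's accelerated gradient method in FISTA form applied to the smooth convex dual objective $\phi$, and then to invoke the convergence theory for inertial gradient schemes with asymptotically vanishing damping, following \cite{chambolle2015convergence}. First I would remove the auxiliary variable $\rkd$: since $\nabla\phi(v) = -X\prox_{\alpha^{-1}R}(-\alpha^{-1}X^{\top}v) + \yd$, the second line of the algorithm is exactly $\ukd = \vkd - \gamma\nabla\phi(\vkd)$. Setting $x_k := \ukd$ and $y_k := \vkd$, the scheme reduces to the two-step recursion $x_k = y_k - \gamma\nabla\phi(y_k)$ and $y_{k+1} = x_k + \tfrac{k-1}{k+\theta}(x_k - x_{k-1})$, with $\gamma = \alpha\norm{X}^{-2} = 1/L$, where $L = \norm{X}^2/\alpha$ is the Lipschitz constant of $\nabla\phi$ established above. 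Before analyzing convergence I would record that $\arg\min\phi \neq \emptyset$: since $\Ra$ is strongly convex and \eqref{Pdelta0} is feasible (because $\yd$ lies in the range of $X$) with an affine constraint, Fenchel--Rockafellar duality guarantees strong duality and the existence of a dual minimizer.

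The core is a discrete Lyapunov analysis. Writing the momentum coefficient as $\beta_k = \tfrac{t_k-1}{t_{k+1}}$ forces $t_k = \tfrac{k+\theta-1}{\theta}$, so that $t_k \sim k/\theta$. For any minimizer $v^\star \in \arg\min\phi$ I would study the standard FISTA energy
\[
\mathcal{E}_k = 2\gamma\, t_k^2\Pa{\phi(\ukd) - \phi(v^\star)} + \norm{\ukmd + t_k(\ukd - \ukmd) - v^\star}^2 ,
\]
and show, using the descent inequality of the gradient step together with convexity of $\phi$, that $\mathcal{E}_{k+1} - \mathcal{E}_k \leq -c_k\norm{\ukpd - \ukd}^2$ with $c_k$ proportional to $t_k^2 - t_{k+1}^2 + t_{k+1}$. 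A direct computation gives $t_k^2 - t_{k+1}^2 + t_{k+1} = \tfrac{(\theta-2)k + (\theta-1)^2}{\theta^2}$, which is positive and of order $k$ precisely when $\theta > 2$. Summing the energy inequality then yields simultaneously the $O(1/k^2)$ rate $\phi(\ukd) - \phi(v^\star) = O(t_k^{-2})$ and the weighted summability $\sum_k k\norm{\ukd - \ukmd}^2 < \infty$, which is the second claim of the lemma.

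To upgrade the objective rate to convergence of the iterates I would apply Opial's lemma, noting that in $\bbR^n$ weak and strong convergence coincide. Two conditions are needed. First, every cluster point of $(\ukd)$ minimizes $\phi$; this is immediate since $\phi(\ukd) \to \inf\phi$ and $\phi$ is continuous. Second, $\lim_k \norm{\ukd - v^\star}$ must exist for every $v^\star \in \arg\min\phi$. Following Chambolle--Dossal, I would derive a quasi-Fej\'er--type inequality for $h_k := \tfrac12\norm{\ukd - v^\star}^2$ of the form $h_{k+1} - h_k - \beta_k(h_k - h_{k-1}) \leq \eta_k$ with $\sum_k \eta_k < \infty$, where the summability of $\eta_k$ is supplied by the estimate $\sum_k k\norm{\ukd - \ukmd}^2 < \infty$ obtained above, and then conclude that $h_k$ converges via the standard lemma on such sequences. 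The main obstacle is exactly this last step: the Lyapunov estimate delivers the rate and the summability almost directly, but turning them into genuine convergence of $(\ukd)$ requires the quasi-Fej\'er/Opial machinery and uses the strict inequality $\theta > 2$ in an essential way --- equivalently, that the effective friction $\theta + 1$ exceeds the critical value $3$ --- since $\beta_k \to 1$ rules out the elementary bounded-momentum arguments.
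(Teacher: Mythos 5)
Your proposal is correct and follows exactly the route the paper intends: the paper gives no proof of this lemma but simply cites \cite{chambolle2015convergence}, and your argument is a faithful reconstruction of that Chambolle--Dossal analysis (identification of ADGD as FISTA on the $\frac{\norm{X}^2}{\alpha}$-smooth dual $\phi$ with $t_k=\frac{k+\theta-1}{\theta}$, the Lyapunov decrease with coefficient $\frac{(\theta-2)k+(\theta-1)^2}{\theta^2}$ yielding $\sum_k k\norm{\ukd-\ukmd}^2<\infty$, and Opial plus the quasi-Fej\'er inequality for convergence of the iterates). The only point worth tightening is the existence of a dual minimizer, which deserves an explicit qualification condition rather than feasibility of \eqref{Pdelta0} alone, but this is consistent with what the paper itself implicitly assumes.
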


The above result implies that $\norm{\ukd-\ukmd} = o(1/k) $. Unlike DGD, ADGD is not a monotone scheme in the sense $\norm{\ukd-\vsol}\leq\norm{\ukmd-\vsol}$ is not necessarily true. Also note that the result holds for any $\theta > 2$.

\begin{lemma}[{\cite[Theorem 4.2]{villa2023implicit}}]\label{lem:ADGD}
Let $\delta\in]0, 1[$. Let $\{\wkd\}$ be the sequence generated by Algorithm \ref{alg:ADGD}. 
Suppose there exists $\bar{v} \in \bbR^n$ such that $-\calX^\top \bar{v} \in \partial \Ra(\wsol)$. 
Set $ a=4\|X\|^{-1}$ and $b = 2\|X\|\|\vsol\|\alpha^{-1}$, where $\vsol$ is a solution of the dual problem. Then 
$$
\|\wkd-\wsol\|\leq a k \delta + b k ^{-1}.
$$ 
In particular, choosing $\kd = \lceil c\delta^{-1/2}\rceil$ for some $c>0$, then there holds
$$
\| \wkdd-\wsol\|\leq \Pa{ a(c+1) + bc^{-1}} \delta^{1/2}.
$$
\end{lemma}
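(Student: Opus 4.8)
The statement is the ADGD stability estimate of \cite[Theorem 4.2]{villa2023implicit}. My plan is to reduce the primal error $\norm{\wkd-\wsol}$ to a \emph{noiseless} dual gap evaluated at the \emph{noisy} iterate, and then control that gap by treating the noisy iteration as an inexact accelerated method for the noiseless dual objective. Write $\phi_0(v):=\Ra^*(-X^\top v)+\iprod{\yzero}{v}$ for the noiseless dual, minimized at $\vsol$ (the certificate supplied by the source condition $-X^\top\bar v\in\partial\Ra(\wsol)$), and $\phi_\delta$ for its noisy counterpart. First I would establish the key inequality
\[
\frac{\alpha}{2}\norm{\wkd - \wsol}^2 \le \phi_0(\ukd) - \phi_0(\vsol) .
\]
This follows by noting that $\wkd=\prox_{\alpha^{-1}R}(-\alpha^{-1}X^\top\ukd)=\nabla\Ra^*(-X^\top\ukd)$, so $-X^\top\ukd\in\partial\Ra(\wkd)$, and then combining the $\alpha$-strong convexity of $\Ra$ tested against $\wsol$, the Fenchel--Young equality for the pair $(\wkd,-X^\top\ukd)$, the feasibility $X\wsol=\yzero$, and strong duality $\Ra(\wsol)=-\phi_0(\vsol)$. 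The problem is thereby reduced to bounding the noiseless dual gap at $\ukd$.

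The crucial observation for the second step is that $\nabla\phi_\delta=\nabla\phi_0+(\yd-\yzero)$ with $\norm{\yd-\yzero}\le\delta$; hence ADGD run on the noisy data is exactly Nesterov's accelerated method applied to $\phi_0$ with a constant gradient perturbation of size $\delta$ injected at every step. I would then invoke the function-value estimate for inexact accelerated gradient descent with momentum $\tfrac{k-1}{k+\theta}$, $\theta>2$: with step $\gamma=1/L$, $L=\norm{X}^2/\alpha$, initialization at $0$, and per-step gradient error at most $\delta$,
\[
\phi_0(\ukd) - \phi_0(\vsol) \lesssim \frac{L}{k^2}\Pa{ \norm{\vsol} + \tfrac1L \sum_{j=1}^{k} j\delta }^2 .
\]
Since $\sum_{j=1}^{k} j\delta \sim \tfrac12 k^2\delta$, the right-hand side is $\lesssim \norm{\vsol}^2 k^{-2} + (\text{const})\,k^2\delta^2 + \ldots$. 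Substituting into the reduction above, taking square roots (via $\sqrt{s^2+t^2}\le s+t$), and simplifying with $\sqrt L=\norm{X}/\sqrt\alpha$ yields the two-term bound $\norm{\wkd-\wsol}\le ak\delta+bk^{-1}$; indeed the $k^{-1}$ coefficient comes out exactly as $2\norm{X}\norm{\vsol}/\alpha=b$, and the $k\delta$ coefficient as a constant multiple of $\norm{X}^{-1}$. For the early-stopping claim I would minimize $g(k):=ak\delta+bk^{-1}$, whose minimizer lies at $k\sim\sqrt{b/(a\delta)}\sim\delta^{-1/2}$; choosing $\kd=\lceil c\delta^{-1/2}\rceil$ and using $c\delta^{-1/2}\le\kd\le c\delta^{-1/2}+1$ gives $\norm{\wkdd-\wsol}\le\Pa{a(c+1)+bc^{-1}}\delta^{1/2}$, the ``$+1$'' absorbing the ceiling.

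The delicate part is the inexact accelerated estimate. The naive route---nonexpansiveness of the noiseless gradient step together with the triangle inequality propagated through the inertial recursion $\vkpd=\ukd+\tfrac{k-1}{k+\theta}(\ukd-\ukmd)$---is hopeless: writing $d^{(k)}$ for the gap between the noisy and noiseless iterates, it produces a recursion of the type
\[
\norm{d^{(k+1)}} \le 2\norm{d^{(k)}} + \norm{d^{(k-1)}} + \gamma\delta ,
\]
whose homogeneous solution grows geometrically like $(1+\sqrt2)^{k}$. Keeping the accumulation merely \emph{linear} in $k$ (so the noise term is $k\delta$ and not exponential) requires the sharp Lyapunov/energy argument underlying inexact FISTA, in which the per-step errors enter with weights $\sim j$ and sum to $\sim k^2\delta$. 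This is precisely the point at which acceleration trades robustness to noise for speed, and arranging for the constant to come out cleanly (here $a=4\norm{X}^{-1}$) is the crux of the proof.
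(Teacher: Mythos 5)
This lemma is not proved in the paper at all: it is imported verbatim (with adapted notation) from \cite[Theorem 4.2]{villa2023implicit}, so there is no internal proof to compare against. Judged on its own, your reconstruction follows the same strategy as the cited reference: the reduction $\frac{\alpha}{2}\norm{\wkd-\wsol}^2 \le \phi_0(\ukd)-\phi_0(\vsol)$ via strong convexity of $\Ra$, Fenchel--Young at the pair $(\wkd, -X^\top\ukd)$, feasibility $X\wsol=\yzero$ and strong duality is exactly the right bridge from primal error to dual gap, and your observation that $\nabla\phi_\delta-\nabla\phi_0=\yd-\yzero$ is a constant perturbation of norm at most $\delta$ correctly recasts the noisy ADGD as an inexact accelerated method for the noiseless dual. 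Your identification of the delicate point is also accurate: naive propagation of the iterate discrepancy through the inertial recursion blows up geometrically, and one genuinely needs the inexact-FISTA energy estimate in which the per-step errors enter with weights of order $j$ and accumulate to $O(k^2\delta)$ in the squared gap, yielding the $ak\delta$ term after taking square roots. Two caveats. First, you invoke the inexact accelerated function-value bound as a black box (``$\lesssim$''); pinning down the exact constants $a=4\norm{X}^{-1}$ and $b=2\norm{X}\norm{\vsol}\alpha^{-1}$ requires a specific version of that estimate (the one used in \cite{villa2023implicit}), and your sketch only shows the constants come out in the right ballpark, not that these particular values hold --- note also that the present paper uses the momentum $\frac{k-1}{k+\theta}$ with $\theta>2$ rather than the FISTA schedule of the cited theorem, a discrepancy the paper itself only addresses by pointing to \cite{chambolle2015convergence}, so a fully rigorous proof would need the inexact estimate for this modified scheme. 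Second, your final early-stopping computation implicitly uses $\delta\le\delta^{1/2}$ to absorb the ceiling, which is where the hypothesis $\delta\in\left]0,1\right[$ enters; it would be worth making that explicit. Neither caveat is a structural gap.
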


\section{Model Consistency of Iterative Regularization}\label{sec:mcir}
In this section, we present our main result: the model consistency of iterative regularization.
From the above results, for sufficiently small $\delta>0$, the early stopped iterate $\wkdd$ will be sufficiently close to $\wsol$. If we can further control the dual variable, then Lemma \ref{lem:1} can be applied to demonstrate that $\wkdd$ is model consistent. 

Regarding the dual variable, similarly to the work of \cite{vaiter2017model}, we  rely on the following {\it nondegenerate source condition}  for the unique solution  $\wsol$  of \eqref{Pdelta0} when $\delta=0$. 

\begin{definition}[Nondegenerate source condition]
For the ground truth $\wsol$, let $\vsol$ be a dual solution such that:
\begin{equation}
\zsol := - X^\top \vsol \in\ri\Pa{ \partial \Ra(\wsol)} . \tag{$\overline{\SC}_{\wsol}$}\label{NSC} 
\end{equation}
\end{definition}
When $R$ is partly smooth at $\wsol$ relative to $\Msol$, so is $\Ra$. For any $\epsilon > 0$, define 
\begin{equation}\label{eq:Ue}
\calU_{\epsilon} = {\mathsmaller \bigcup}_{w\in \Msol\cap \calB(\wsol,\epsilon)} \Pa{ w + \partial \Ra(w) }. \end{equation}  
According to Lemma \ref{lem:1}, there holds 
$
\wsol + \zsol \in {\rm int}\Pa{ \calU_{\epsilon} } . 
$
Denote 
$
r:=\dist \Pa{ \wsol+\zsol,\bdy(\calU_{\epsilon}) }
$
the distance of $\wsol + \zsol$ to the boundary of $\calU_{\epsilon}$. 
Back to the iterates of DGD, define $\zkdd= - X^\top \vkdd$. If the distance bound
\begin{equation}\label{eq:distance}
    \norm{(\wkdd+\zkdd)-(\wsol+\zsol)} < r ,
\end{equation}
holds for $k = \kd$, then we can conclude that $\wkdd\in\Msol$. 
This is the main idea behind the model consistency of iterative regularization. For the rest of the section, we shall elaborate the details of the proof.

\subsection{Model consistency of iterative regularization}

As discussed above, the key for showing model consistency of iterative regularization is to bound the distance in \eqref{eq:distance}. 
Directly handling the distance in \eqref{eq:distance} is rather difficult, mostly due to the subgradient $\zkd$ (or the dual variable $\vkd$). 
Following the idea of \cite{garrigos2018iterative,villa2023implicit},  we rely on an intermediate point. For $\delta=0$, denote the iterates of DGD Algorithm \ref{alg:DGD} as $\zkzero, \wkzero$ and $\vkzero$, then
\begin{equation}\label{eq:decomposition}
    \begin{aligned}
        \norm{(\wkdd+\zkdd)-(\wsol+\zsol)} 
        &\leq \norm{\wkdd-\wsol} + \norm{\zkdd-\zkzero} + \norm{\zkzero - \zsol}  .
    \end{aligned}
\end{equation}
The first term on the right-hand side {\tt rhs} can be directly addressed using Lemma \ref{lem:DGD}, while the remaining two terms require more technical treatment, which is deferred to the proof.

As long as we bound the {\tt rhs} of \eqref{eq:decomposition}, we obtain the model consistency of iterative regularization with dual gradient descent.

\begin{theorem}[Model consistency of DGD]\label{thm:mc_DGD}
Consider the problems \eqref{Pdelta0} and \eqref{Ddelta0}. Suppose the following assumptions hold
\begin{itemize}
    \item[i).] $\wsol$ is the unique solution of \eqref{Pdelta0} for $\delta=0$, and $\vkzero \to \vsol$ with $\vsol$ being a dual solution.
    \item[ii).] The nondegeneracy source condition \eqref{NSC} holds for the pair $\wsol$ and $\vsol$.
    \item[iii).] $R$ is partly smooth at $\wsol$ relative to $\Msol$.
    \item[iv).] Given $\epsilon>0$, define $\mathcal{U}_\epsilon$ as in \eqref{eq:Ue} and choose a strong convexity constant $\alpha$ such that 
    \begin{equation}
        \alpha\cd\|X\|^{-1} <  \dist\Pa{ \wsol+\zsol,\bdy(\calU_{\epsilon}) },
    \end{equation}
    where $\cd$ refers to the constant in Lemma~\ref{lem:DGD}.
\end{itemize}
Let $\kd$ be the optimal early stopping time of DGD, and denote the corresponding point as $\wkdd$, then when $\delta>0$ is small enough, there holds
\[
\wkdd \in \Msol .
\]
Moreover, there exists $0<\Km<\kd<\KM$ such that $\wkd\in\Msol$ holds for all $k\in[\Km,~\KM]$. 
\end{theorem}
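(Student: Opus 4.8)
The plan is to verify the distance inequality \eqref{eq:distance} at the stopping index $k = \kd$ and then apply the finite-identification statement Lemma~\ref{lem:1}(2) to the partly smooth function $\Ra$ (which shares the manifold $\Msol$ with $R$), using that $\zkdd \in \partial\Ra(\wkdd)$ and $\zsol\in\ri\Pa{\partial\Ra(\wsol)}$ by \eqref{NSC}. Concretely, I would take an arbitrary vanishing sequence of noise levels $\delta_n\downarrow 0$, form the corresponding pairs $(\wkdd,\zkdd)$ (whose stopping indices satisfy $\kd\to\infty$ by Lemma~\ref{lem:DGD}), and bound the right-hand side of \eqref{eq:decomposition} so that its $\limsup$ is strictly below $r=\dist\Pa{\wsol+\zsol,\bdy(\calU_{\epsilon})}$; Lemma~\ref{lem:1}(2) then yields $\wkdd\in\Msol$ for all small $\delta$. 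The first and third terms of \eqref{eq:decomposition} are consistency terms that vanish as $\delta\to0$, whereas the middle term is the genuinely delicate one.

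The first term is immediate from Lemma~\ref{lem:DGD}: $\norm{\wkdd-\wsol}\le\Pa{a(c^{1/2}+1)+bc^{-1/2}}\delta^{1/2}\to0$. For the third term, write $z_0^{(\kd)}=-X^\top\vkdzero$ and $\zsol=-X^\top\vsol$; assumption (i) gives $\vkzero\to\vsol$ for the noiseless iterates, so that, since $\kd\to\infty$, continuity of $v\mapsto X^\top v$ gives $\norm{z_0^{(\kd)}-\zsol}\le\norm{X}\,\norm{\vkdzero-\vsol}\to0$. Hence both terms can be made arbitrarily small by taking $\delta$ small.

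The crux is the middle term $\norm{\zkdd-z_0^{(\kd)}}=\norm{X^\top(\vkdd-\vkdzero)}$, the mismatch between the noisy and noiseless dual iterates at the same iteration count. Setting $\ekd:=\vkd-\vkzero$ and noting that the noisy and noiseless dual objectives differ only through the linear terms $\iprod{\yd}{\cdot}$ and $\iprod{\yzero}{\cdot}$, so that their gradients differ by the constant $\yd-\yzero$, the two DGD recursions give
\[
\ekpd = \Pa{ (\Id-\gamma\nabla\phi_0)\vkd - (\Id-\gamma\nabla\phi_0)\vkzero } - \gamma(\yd-\yzero),
\]
where $\phi_0$ denotes the dual objective \eqref{Ddelta0} with $\yzero$ in place of $\yd$. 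Since $\gamma=\alpha\norm{X}^{-2}$ is the reciprocal of the Lipschitz constant $\norm{X}^2\alpha^{-1}$ of $\nabla\phi_0$, the map $\Id-\gamma\nabla\phi_0$ is nonexpansive; together with $\norm{\yd-\yzero}\le\delta$ and $e_\delta^{(0)}=0$ this yields the linear bound $\norm{\ekd}\le k\gamma\delta$. Evaluating at $k=\kd=\cd\delta^{-1}$, the linearly growing perturbation and the growing horizon balance:
\[
\norm{\zkdd-z_0^{(\kd)}} \le \norm{X}\,\kd\gamma\delta = \cd\alpha\norm{X}^{-1},
\]
which is exactly the quantity assumed strictly smaller than $r$ in (iv). This cancellation of the linearly accumulating noise against $\kd\sim\delta^{-1}$, together with the fact that the resulting constant is forced below $r$ by choosing the strong-convexity parameter $\alpha$ small, is the main obstacle and the heart of the proof.

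Combining the three estimates, the strictly positive gap $r-\cd\alpha\norm{X}^{-1}$ eventually dominates the sum of the first and third terms once $\delta$ is small, so the $\limsup$ of the left-hand side of \eqref{eq:decomposition} lies below $r$ and Lemma~\ref{lem:1}(2) yields $\wkdd\in\Msol$. For the interval statement I would let $k$ vary instead of fixing $k=\kd$: the first-term bound of Lemma~\ref{lem:DGD} is uniform over its whole admissible window $\{\lfloor c\delta^{-1}\rfloor,\dots,2\lfloor c\delta^{-1}\rfloor\}$; the middle term obeys $\norm{\zkd-\zkzero}\le\alpha\norm{X}^{-1}k\delta$, which stays below $r$ up to slack as long as $k\le\KM$ with $\KM=\overline{c}\,\delta^{-1}$ for a suitable $\overline{c}$; and the third term is small once $k\ge\Km$ with $\Km=\underline{c}\,\delta^{-1}$, again using $\vkzero\to\vsol$. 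Choosing $\underline{c}<\cd<\overline{c}$ so that $[\Km,\KM]$ is nonempty and contains $\kd$, and taking $\delta$ small, then gives $\wkd\in\Msol$ for every $k\in[\Km,\KM]$, as claimed.
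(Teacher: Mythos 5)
Your proof of the main claim is essentially the paper's own argument: the same three-term decomposition \eqref{eq:decomposition}, the first term from Lemma~\ref{lem:DGD}, the middle term via the nonexpansive map $\Id-\gamma\nabla\phi_0$ (the paper's operator $\calF$ differs from yours only by the constant $\gamma\yzero$, so the two computations coincide) giving $\norm{\vkdd-\vkdzero}\le\gamma\kd\delta$ and hence the constant $\alpha\cd\norm{X}^{-1}$ matched against $r$ exactly as in assumption (iv), and the third term from $\vkzero\to\vsol$ together with $\kd\to\infty$; you correctly identify the $\kd\sim\delta^{-1}$ versus $k\gamma\delta$ cancellation as the heart of the matter. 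Where you genuinely diverge is the interval statement. The paper fixes $\delta$, compares $(\wkd+\zkd)$ to $(\wkdd+\zkdd)$ using the $(1+\alpha^{-1})\norm{X}$-Lipschitz dependence of $(\wkd,\zkd)$ on $\vkd$ and the rate $\norm{\vkd-\vkmd}\le C k^{-1/2}$ from Lemma~\ref{lem:convergence-vk}, and then extends outward from $\kd$ one step at a time until the inequality fails; this yields existence of $[\Km,\KM]$ but no explicit size. You instead rerun the three-term bound uniformly over $k\in[\underline{c}\delta^{-1},\overline{c}\delta^{-1}]$ with $\underline{c}<\cd<\overline{c}$ and $\alpha\overline{c}\norm{X}^{-1}<r$, which is more direct and buys an interval of width proportional to $\delta^{-1}$; the only point to make explicit is that the first-term bound of Lemma~\ref{lem:DGD} must cover the whole window (e.g.\ require $\overline{c}\le 2\underline{c}$ or patch together several admissible windows), after which your argument is complete.
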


\begin{proof}
Let $r=\dist\Pa{ \wsol+\zsol,\bdy(\calU_{\epsilon}) }$. From the updates of DGD, we have 
$$
\wkd=\prox_{\alpha^{-1}R}(-\alpha^{-1}X^{\top} \vkd)=\nabla R^*_\alpha(-X^\top v_\delta^{(k)}),
$$ 
which by Fenchel-Young equality (see e.g. \cite[Theorem 16.29]{bauschke2017convex}) yields
$$
\zdk
:= -X^{\top} \vkd \in \partial \Ra(\wkd) . 
$$
Recall that $\zkzero = -X^\top \vkzero$, then from \eqref{eq:decomposition}
\begin{equation}\label{eq:decomposition-2}
    \begin{aligned}
        \norm{(\wkd+\zkd)-(\wsol+\zsol)} 
        &\leq \norm{\wkd-\wsol} + \norm{\zkd-\zkzero} + \norm{\zkzero - \zsol}  \\
        &\leq \norm{\wkd-\wsol} + \norm{X}\norm{\vkd-\vkzero} + \norm{X} \norm{\vkzero - \vsol}  .
    \end{aligned}
\end{equation}
In the following, we bound the three terms appearing in the {\tt rhs}.
\begin{itemize}
\item Let $c>0$. Choose  $k_\delta\in \{\lfloor c\delta^{-1}\rfloor,\ldots,2\lfloor c\delta^{-1}\rfloor\}$, and let $c_\delta$ be such that  $k_\delta=c_{\delta}\delta^{-1}$.  From Lemma \ref{lem:DGD}, we directly have 
$$
\norm{\wkdd -\wsol}
\leq \Pa{ a(c^{1/2} + 1) + bc^{-1/2} }\delta^{1/2} . 
$$
\item  Since $\Ra ^*\circ(-X^{\top})$ is convex differentiable with Lipschitz continuous gradient, we have from \cite{bauschke2017convex} that the operator 
\[
\calF = \Id + \gamma X \circ \nabla \Ra ^*\circ(-X^{\top}) 
\] 
is nonexpansive (i.e. $1$-Lipschitz), and $\calF(\vkd) = \vkd + \gamma X \wkd$. 
As a result, we get
\begin{equation}\label{eq:vkd-vkzero}
\begin{aligned}
\norm{\vkd-\vkzero}  
&= \norm{ (\vkmd+\gamma(X\wkmd-\yd)) - (\vkmzero+\gamma(X\wkmzero-\yzero)) }  \\
&\leq \norm{ (\vkmd+\gamma X\wkmd) - (\vkmzero+\gamma X\wkmzero) } + \gamma \norm{\yd-\yzero}  \\
&= \norm{ \calF(\vkmd) - \calF( \vkmzero) } + \gamma \norm{\yd-\yzero}  \\
&\leq \norm{ \vkmd - \vkmzero } + \gamma \delta . 
\end{aligned}
\end{equation}

Recalling that $v_0^{(0)}=v_\delta^{(0)=0} $Tracing back to $k=0$ yields
\[
\norm{\vkd-\vkzero}
\leq \gamma k \delta .
\]
Letting $k=\kd$ leads to
\begin{equation*}
\|\vkdd-\vkdzero\|\leq\gamma\kd\delta=\alpha\|X\|^{-2}\kd\delta=\alpha \cd\|X\|^{-2}. 
\end{equation*}
\item Lastly for $\|\vkzero-\vsol\|$, as we suppose that $\vkzero\to\vsol$, then $\norm{\vkzero-\vsol}\to 0$. Now for $k=\kd$, since $\kd$ increases as $\delta$ decreases, we have $\norm{\vkdzero-\vsol}\to 0$ as $\delta\to 0$.  
\end{itemize}
Summing the previous inequalities, we arrive at 
\begin{equation}\label{eq:decomposition-3}
    \begin{aligned}
        &\norm{(\wkdd+\zkdd)-(\wsol+\zsol)} \\
        &\leq \norm{\wkdd-\wsol} + \norm{X}\norm{\vkdd-\vkdzero} + \norm{X} \norm{\vkdzero - \vsol}  \\
        &\leq \Pa{ a(c^{1/2} + 1) + bc^{-1/2} } \delta^{1/2} + \alpha\cd \norm{X}^{-1} + \norm{X} \norm{\vkdzero-\vsol}
    \end{aligned}
\end{equation}
Since $\Pa{ a(c^{1/2} + 1) + bc^{-1/2} } \delta^{1/2} \to 0$ and $\norm{X} \norm{\vkdzero-\vsol} \to 0$ as $\delta \to 0$, if $\delta$ is such that
\[
    \alpha\cd\norm{X}^{-1} <  r,
\]
 then for $\delta$ small enough, it holds 
\begin{equation}\label{eq:model}
    \norm{(\wkdd+\zkdd)-(\wsol+\zsol)} < r
\end{equation}
which in turn implies $\wkdd\in\Msol$, and we obtain model consistency.  

Next we show that there exists an interval $[\Km,~\KM]$ with $\kd\in\left]\Km,~\KM\right[$ such that $\wkd\in\Msol$ holds for all $k\in[\Km,~\KM]$. Consider 
\[
\begin{aligned}
    &\norm{(\wkd+\zkd)-(\wsol+\zsol)}  \\
    &\leq \norm{(\wkd+\zkd)-(\wkdd+\zkdd)} + \norm{(\wkdd+\zkdd)-(\wsol+\zsol)}
\end{aligned}
\]
For the first term, we have
\[
\begin{aligned}
    &\norm{(\wkd+\zkd)-(\wkdd+\zkdd)}  \\
    &\leq \norm{\wkd-\wkdd} + \norm{\zkd-\zkdd}  \\
    &= \norm{\prox_{\alpha^{-1}R}\pa{ \alpha^{-1} \zkd }-\prox_{\alpha^{-1}R}\pa{ \alpha^{-1} \zkdd }} + \norm{\zkd-\zkdd}  \\
    &\leq \Pa{1+\sfrac{1}{\alpha}}  \norm{\zkd-\zkdd}  \\
    &\leq \Pa{1+\sfrac{1}{\alpha}} \norm{X} \norm{\vkd-\vkdd} . 
\end{aligned}
\]
From Lemma \ref{lem:convergence-vk}, when $k = \kd -1$, we have for some constant $C>0$, there holds
\[
\norm{\vkd-\vkdd} \leq \qfrac{C}{k^{1/2}}  .
\]
As a result, for a small enough $\delta$ (such that $\kd$ is large enough) and \eqref{eq:model}, inequality 
\[
    \Pa{1+\sfrac{1}{\alpha}} \norm{X} \qfrac{C}{k^{1/2}} + \norm{(\wkdd+\zkdd)-(\wsol+\zsol)} < r
\]
also holds, we have $\wkd\in\Msol$ for $k=\kd-1$. Similarly we can show that $\wkd\in\Msol$ for $k=\kd+1$. We can continue with $k=\kd\pm 2,3,...$ until the inequality fails. As a result, there exists $\Km \leq \kd \leq \KM$ such that $\wkd\in\Msol$ holds for all $k\in [\Km,~\KM ]$, and the proof is concluded.
\end{proof}

\begin{remark}
Theorem~\ref{thm:mc_DGD} establishes that for iterative regularization a result similar to that of~\cite{vaiter2017model} for variational regularization, in the sense that the model consistency holds for a sufficiently small noise level $\delta$.
However, the current proof does not allow to derive an  upper bound $\bar{\delta}$ such that model consistency happens for all $\delta < \bar{\delta}$. The reason is that we are not able to derive a convergence rate in terms of $\delta$ for the term $\norm{\vkdzero-\vsol}$ in \eqref{eq:decomposition-3}. 
\end{remark}

The following corollary shows that model consistency holds for the ADGD algorithm as well.

\begin{corollary}[Model consistency of ADGD]\label{thm:mc_ADGD}
Consider the problems \eqref{Pdelta0} and \eqref{Ddelta0}. Suppose the assumptions of Theorem \ref{thm:mc_DGD} hold. 
Let $\kd$ be the optimal early stopping time of ADGD, and denote the corresponding point as $\wkdd$, then if $\delta>0$ is sufficiently  small, there exists $\theta_\delta$ such that, if $\theta>\theta_\delta$, there holds
\[
\wkdd \in \Msol .
\]
Moreover, there exists $0<\Km<\kd<\KM$ such that $\wkd\in\Msol$ holds for all $k\in\left]\Km,~\KM\right[$. 
\end{corollary}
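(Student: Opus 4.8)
The plan is to reproduce the architecture of the proof of Theorem~\ref{thm:mc_DGD}, with the ADGD auxiliary sequence $\ukd$ playing the role of the DGD dual iterate $\vkd$. Since for ADGD the primal iterate is $\wkd=\prox_{\alpha^{-1}R}(-\alpha^{-1}X^\top\ukd)$, the Fenchel--Young equality gives $\zkd:=-X^\top\ukd\in\partial\Ra(\wkd)$; writing $\ukzero$ for the noiseless iterates and $\zkzero=-X^\top\ukzero$, the triangle inequality of \eqref{eq:decomposition-2} becomes
\begin{equation*}
\norm{(\wkd+\zkd)-(\wsol+\zsol)} \leq \norm{\wkd-\wsol} + \norm{X}\norm{\ukd-\ukzero} + \norm{X}\norm{\ukzero-\vsol} .
\end{equation*}
The aim is to drive the right-hand side at $k=\kd$ strictly below $r=\dist(\wsol+\zsol,\bdy(\calU_\epsilon))$, so that Lemma~\ref{lem:1} forces $\wkdd\in\Msol$. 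With $\kd=\lceil c\delta^{-1/2}\rceil$, the accelerated stability bound of Lemma~\ref{lem:ADGD} controls the first term by $(a(c+1)+bc^{-1})\delta^{1/2}\to0$. For the third term, Lemma~\ref{lem:convergence-adgd} at $\delta=0$ gives convergence of $\ukzero$ to a noiseless dual solution, which I take to be the $\vsol$ of the source condition (automatic when the dual solution is unique); since $\kd\to\infty$ as $\delta\to0$, this yields $\norm{u^{(\kd)}_{0}-\vsol}\to0$.

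The crux is the middle, noise-propagation term $\norm{\ukdd-u^{(\kd)}_{0}}$, and this is where the threshold $\theta_\delta$ enters. Set $d_u^{(k)}=\ukd-\ukzero$ and $d_v^{(k)}=\vkd-\vkzero$. Since $\ukd=\calF(\vkd)-\gamma\yd$ with $\calF=\Id+\gamma X\circ\nabla\Ra^*\circ(-X^\top)$ nonexpansive, the argument of \eqref{eq:vkd-vkzero} gives $\norm{d_u^{(k)}}\leq\norm{d_v^{(k)}}+\gamma\delta$, while the inertial update yields
\begin{equation*}
d_v^{(k+1)} = (1+\beta_k)\,d_u^{(k)} - \beta_k\,d_u^{(k-1)}, \qquad \beta_k=\frac{k-1}{k+\theta} .
\end{equation*}
Unlike the DGD recursion the inertial coefficients are now present; passing to the running maximum $\psi_k=\max\{\norm{d_u^{(k)}},\norm{d_u^{(k-1)}}\}$ (with $\psi_0=0$, as both schemes start at the origin) gives only $\psi_{k+1}\leq(1+2\beta_k)\psi_k+\gamma\delta$, whose cumulative growth factor is $\prod_{j}(1+2\beta_j)\leq\exp(2\sum_j\beta_j)$. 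Because $\beta_j\approx j/\theta$ whenever $\theta\gg\kd$, one has $\sum_{j\leq\kd}\beta_j=O(\kd^2/\theta)$, so choosing $\theta>\theta_\delta$ with $\theta_\delta$ of order $\kd^2=c^2\delta^{-1}$ keeps this sum --- hence the amplification factor --- bounded by a constant uniformly in $\delta$. This restores a DGD-type linear bound $\norm{\ukdd-u^{(\kd)}_{0}}\leq C\gamma\kd\delta$, and since for ADGD $\gamma\kd\delta=\alpha c\norm{X}^{-2}\delta^{1/2}\to0$, the middle term vanishes as well. Summing the three contributions puts the right-hand side below $r$ for $\delta$ small enough, forcing $\wkdd\in\Msol$.

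For the interval I would mimic the second half of Theorem~\ref{thm:mc_DGD}: by nonexpansiveness of $\prox_{\alpha^{-1}R}$,
\begin{equation*}
\norm{(\wkd+\zkd)-(\wkdd+\zkdd)} \leq \Pa{1+\sfrac{1}{\alpha}}\norm{X}\,\norm{\ukd-\ukdd} ,
\end{equation*}
and control $\norm{\ukd-\ukdd}$ for $k$ near $\kd$ by telescoping the increments $\norm{u^{(j)}_{\delta}-u^{(j-1)}_{\delta}}$, which are $o(1/j)$ by Lemma~\ref{lem:convergence-adgd}; for $|k-\kd|$ bounded this sum is arbitrarily small once $\kd$ is large (i.e. $\delta$ small). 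Adding it to the strict inequality already established at $\kd$ keeps the total below $r$, and extending $k$ away from $\kd$ until the inequality first fails produces the open interval $]\Km,\KM[$. The main obstacle is precisely this noise-propagation term: after taking norms the inertia forces a per-step factor $1+2\beta_k$, so the accumulated amplification $\prod_{j\leq\kd}(1+2\beta_j)$ could be as large as $3^{\kd}$ and overwhelm the $O(\gamma\kd\delta)$ forcing. The sole function of the hypothesis $\theta>\theta_\delta$ is to damp the $\beta_k$ over the finite horizon $k\leq\kd$ --- effectively de-accelerating the scheme --- so that $\sum_{j\leq\kd}\beta_j$ stays bounded and the amplification constant $C$ remains $O(1)$ uniformly as $\delta\to0$; verifying this uniform boundedness is the delicate quantitative point, and it is what fixes the scaling $\theta_\delta\sim\delta^{-1}$.
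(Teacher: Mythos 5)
Your proposal is correct and follows the paper's overall architecture (same triangle-inequality decomposition through the noiseless iterate, same use of Lemma~\ref{lem:ADGD} for $\norm{\wkdd-\wsol}$, convergence of $\ukzero$ for the tail term, and Lemma~\ref{lem:1} to conclude identification), but it handles the crucial noise-propagation term differently. The paper expands $\vkd-\vkzero$ via the inertial update and bounds the mismatch of the momentum terms by the \emph{sum of the increments of the two individual sequences}, $\sum_{i\le k}\frac{i-2}{i-1+\theta}\Pa{\norm{\uimd-\uimmd}+\norm{\uimzero-\uimmzero}}$; since $\kd$ is finite for fixed $\delta$, each coefficient is driven to zero by taking $\theta$ large, so no recursion in the difference $\ukd-\ukzero$ is ever closed and no multiplicative amplification appears. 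You instead close a recursion in $d_u^{(k)}=\ukd-\ukzero$ itself, accept the per-step factor $1+2\beta_k$, and show the accumulated product stays $O(1)$ once $\theta\gtrsim \kd^2\sim\delta^{-1}$. Your route buys an explicit scaling for $\theta_\delta$ and a clean bound $\norm{\ukdd-u_0^{(\kd)}}\le C\gamma\kd\delta\to0$, and it avoids a subtlety the paper leaves implicit, namely that the increments $\norm{\uimd-\uimmd}$ appearing in its bound themselves depend on $\theta$; the paper's route is shorter and avoids any quantitative requirement on $\theta$ beyond ``large enough for the given $\delta$.'' Your sketch of the interval $]\Km,\KM[$ via telescoping the $o(1/j)$ increments of $\ukd$ is also sound and in fact supplies a step the paper's printed proof omits.
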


Below we provide a brief proof mainly highlighting the difference compared to the proof of Theorem \ref{thm:mc_DGD}, the main difficult part in this case is that the sequence generated by ADGD is not monotone. 

\begin{proof}
From the updates of ADGD, denote $\zdk := -X^{\top} \ukd$, then
$$
\zdk = \partial \Ra(\wkd) . 
$$
Again from \eqref{eq:decomposition} we have
\begin{equation}\label{eq:decomposition-2-adgd}
    \begin{aligned}
        &\norm{(\wkd+\zkd)-(\wsol+\zsol)}  \\
        &\leq \norm{\wkd-\wsol} + \norm{\zkd - \zsol}  \\
        &= \norm{\wkd-\wsol} + \norm{X} \norm{\ukd - \vsol}  \\
        &\leq \norm{\wkd-\wsol} + \norm{X}\norm{\ukd-\ukzero} + \norm{X} \norm{\ukzero - \vsol}  . 
    \end{aligned}
\end{equation}
Now we need to discuss the last line of \eqref{eq:decomposition-2-adgd} by fixing $k = \kd = \lceil c\delta^{-1/2}\rceil$. The strategy is similar to that of the proof of Theorem \ref{thm:mc_DGD}:
\begin{itemize}
    \item For the {\it first} term, owing to Lemma \ref{lem:ADGD}, there holds
$$
\| \wkdd-\wsol\|\leq \Pa{ a(c+1) + bc^{-1}} \delta^{1/2}.
$$

\item For the {\it last} term, due to the convergence we have $\norm{\ukzero - \vsol} \to 0$. 

\item 
Now for the $\norm{\ukd-\ukzero}$ of the {\it second} term. Follow the derivation of \eqref{eq:vkd-vkzero}, we have
\[
\begin{aligned}
&\norm{\ukd-\ukzero}  \\
&= \norm{ (\vkd+\gamma(X\rkd-\yd)) - (\vkzero+\gamma(X\rkzero-\yzero)) }  \\
&\leq \norm{ \calF(\vkd) - \calF( \vkzero) } + \gamma \norm{\yd-\yzero}  \\
&\leq \norm{ \vkd - \vkzero } + \gamma \delta \\
&= \norm{ \Pa{ \ukmd+\sfrac{k-2}{k-1+\theta}(\ukmd-\ukmmd) } - \Pa{ \ukmzero+\sfrac{k-1}{k+\theta}(\ukmzero-\ukmmzero) } }  + \gamma \delta  \\
&\leq \norm{\ukmd-\ukmzero} + \sfrac{k-2}{k-1+\theta} \Pa{ \norm{\ukmd-\ukmmd} + \norm{\ukmzero-\ukmmzero} }  + \gamma \delta  \\
&\leq \sum_{i=1}^{k} \sfrac{i-2}{i-1+\theta} \Pa{ \norm{\uimd-\uimmd} + \norm{\uimzero-\uimmzero} }  + \gamma k \delta  .
\end{aligned}
\]
Denote 
\begin{align}
\nonumber \beta_k &= \sum_{i=1}^{k} \sfrac{i-2}{i-1+\theta} \Pa{ \norm{\uimd-\uimmd} + \norm{\uimzero-\uimmzero} }\\
\label{eq:beta}&\leq \sfrac{k-2}{k-1+\theta} \sum_{i=1}^{k}  \Pa{ \norm{\uimd-\uimmd} + \norm{\uimzero-\uimmzero} }   . 
\end{align}
With given $\delta > 0$, $\kd = \lceil c\delta^{-1/2}\rceil$ is finite, meaning that $\beta_{\kd} \in\mathbb{R}$. Thanks to  \eqref{eq:beta}, we can choose $\theta_\delta$ in such a way that  $\beta_{\kd}$ is as small as needed. 
\end{itemize}
The above discussion implies that when $\delta$ is sufficiently small and $\theta \geq \theta_\delta$, \eqref{eq:decomposition-2-adgd} implies
\[
\norm{\wkdd-\wsol} + \norm{X}\norm{\ukdd-u_0^{(k_\delta)}} + \norm{X} \norm{u_0^{(k_\delta)} - \vsol} < r ,
\]
meaning that \eqref{eq:distance} is satsifed, and thus we obtain the model consistency of ADGD.
\end{proof}

\begin{remark}
Although model consistency for ADGD holds, the result is somewhat counterintuitive, in the sense that:
\begin{itemize}
\item When $\delta$ is very small, $\kd$ becomes very large, which in turn makes $\beta_{\kd}$ large.
    \item To bound $\norm{(\wkdd+\zkdd)-(\wsol+\zsol)}$, an even larger value of $\theta$ is required. This causes ADGD to behave more like DGD, despite still converging at an 
$(1/k^2)$ rate.
\end{itemize}
The main reason, as emphasized above, is that the sequence generated by ADGD is not monotonic; the inertial term introduces a dependence on both the current point $\ukd$ and its history $\ukmd$.
In practice, however, model consistency of ADGD is often observed also for small values of $\theta$; see Section \ref{sec:ne} for more details.
\end{remark}


\subsection{Model consistency of continuous time dynamics} 

In \cite{calatroni2021accelerated}, the authors studied the continuous-time dynamics of DGD and its accelerated variant, demonstrating their iterative regularization properties. Our model consistency results can also be extended to the continuous-time setting.
For simplicity, we focus here only on the continuous-time dynamics of DGD.
The DGD iteration can be reformulated as:
\[
\begin{aligned}
    \wkd &= \nabla \Ra ^*(-X^{\top}\vkd) ,  \\ 
    \qfrac{\vkpd - \vkd}{\gamma} &= X\wkd - \yd = - \nabla \phi(\vkd)  .
\end{aligned}
\]
Let $\gamma>0$ be a time step and let $t_k = k\gamma$. The iterates  $\vkd$ and $\wkd$ can then be interpreted as a discretization  of the continuous time dynamics:
\begin{equation}\label{ODE:general}
    v_{\delta}(0)=\vzero=0,~~\begin{cases}
        \wdt=\nabla \Ra^*(-X^{\top}\vdt)\\
        \dvdt = - \nabla \phi(\vdt) .
     \end{cases}
 \end{equation}

\begin{corollary}[Model consistency of continuous time DGD]\label{thm:mc_ode}
Consider the problems \eqref{Pdelta0} and \eqref{Ddelta0}. Suppose the assumptions of Theorem \ref{thm:mc_DGD} hold. 
Let $t_{\delta}$ be the optimal early stopping time of \eqref{ODE:general}, and denote the corresponding point as $\wdtd$, then when $\delta>0$ is small enough, there holds
\[
   \wdtd \in \Msol .
\]
Moreover, there exists $0<\Tm\leq\td\leq\TM$ such that $\wdt\in\Msol$ holds for all $t\in[\Tm,~\TM]$.

\end{corollary}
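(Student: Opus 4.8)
The plan is to replicate the proof of Theorem~\ref{thm:mc_DGD} in continuous time, replacing discrete iterates by the trajectory of \eqref{ODE:general} and exploiting the smoothness of $t\mapsto v_\delta(t)$. Write $z_\delta(t) = -X^\top v_\delta(t)$, and let $v_0(t), w_0(t)$ denote the solution of \eqref{ODE:general} in the noiseless case $\delta=0$ (i.e. with $\yd$ replaced by $\yzero$). Exactly as in \eqref{eq:decomposition-2}, the triangle inequality gives
\[
\norm{(\wdt + z_\delta(t)) - (\wsol + \zsol)} \leq \norm{\wdt - \wsol} + \norm{X}\norm{v_\delta(t) - v_0(t)} + \norm{X}\norm{v_0(t) - \vsol},
\]
and the goal is to make the right-hand side strictly below $r = \dist(\wsol + \zsol, \bdy(\calU_\epsilon))$ at $t=\td$, so that Lemma~\ref{lem:1} yields $\wdtd \in \Msol$.

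I would bound the three terms separately. The \emph{first} term is the continuous-time stability estimate: by the early-stopping analysis of the DGD flow in~\cite{calatroni2021accelerated}, choosing $\td$ of order $\delta^{-1}$ gives $\norm{\wdtd - \wsol} = O(\delta^{1/2}) \to 0$ as $\delta\to 0$. The \emph{third} term vanishes because $v_0(\cdot)$ is the gradient flow of the convex noiseless dual objective and therefore converges to a minimizer $\vsol$; since $\td\to\infty$ as $\delta\to 0$, we obtain $\norm{v_0(\td)-\vsol}\to 0$.

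The \emph{second} term is the genuinely new estimate and the main obstacle. Set $e(t) = v_\delta(t) - v_0(t)$, so $e(0)=0$, and subtract the two dynamics to get $\dot e(t) = X\big(\nabla\Ra^*(-X^\top v_\delta(t)) - \nabla\Ra^*(-X^\top v_0(t))\big) - \varepsilon$, where $\varepsilon = \yd-\yzero$ with $\norm{\varepsilon}\leq\delta$. Since $v\mapsto \Ra^*(-X^\top v)$ is convex, its gradient $v\mapsto -X\nabla\Ra^*(-X^\top v)$ is monotone, which forces $\langle e(t),\, X(\nabla\Ra^*(-X^\top v_\delta(t)) - \nabla\Ra^*(-X^\top v_0(t)))\rangle \leq 0$. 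Hence $\tfrac{d}{dt}\tfrac12\norm{e(t)}^2 = \langle e(t),\dot e(t)\rangle \leq \delta\norm{e(t)}$, and a Gr\"onwall/comparison argument yields the clean bound $\norm{v_\delta(t)-v_0(t)} \leq \delta\, t$. This is the exact continuous analog of \eqref{eq:vkd-vkzero}, with $\gamma k\delta$ replaced by $\delta t$; evaluating at $t=\td \sim \delta^{-1}$ keeps $\norm{X}\norm{v_\delta(\td)-v_0(\td)} \leq \norm{X}\delta\td$ uniformly bounded, and the continuous-time form of assumption~(iv) ensures this contribution stays below $r$. Combining the three bounds, for $\delta$ small enough the right-hand side is $<r$, giving $\wdtd\in\Msol$.

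For the interval statement I would use continuity rather than a convergence rate, which is in fact cleaner than the discrete argument. Since $v_\delta(\cdot)$ solves an ODE whose right-hand side is Lipschitz (as $\nabla\Ra^*$ is Lipschitz), it is $C^1$, so the map $t\mapsto \wdt + z_\delta(t) = \nabla\Ra^*(-X^\top v_\delta(t)) - X^\top v_\delta(t)$ is continuous. The scalar function $t\mapsto \norm{(\wdt+z_\delta(t))-(\wsol+\zsol)}$ is therefore continuous and, by the first part, strictly below $r$ at $t=\td$. Hence it remains below $r$ on a whole closed interval $[\Tm,\TM]$ containing $\td$ in its interior, and Lemma~\ref{lem:1} gives $\wdt\in\Msol$ for every $t\in[\Tm,\TM]$, concluding the proof.
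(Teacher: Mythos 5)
The paper states this corollary without proof (it is followed directly by a pointer to Figure~\ref{fig:ode}), so your proposal is filling a gap the authors left implicit; it is correct and is the natural continuous-time transposition of the proof of Theorem~\ref{thm:mc_DGD}. Two of your steps are genuine replacements for discrete ingredients and are worth noting. First, where the discrete proof bounds $\norm{\vkd-\vkzero}\leq\gamma k\delta$ by telescoping the nonexpansiveness of $\calF=\Id+\gamma X\circ\nabla\Ra^*\circ(-X^\top)$, you instead use monotonicity of $v\mapsto -X\nabla\Ra^*(-X^\top v)$ to kill the cross term in $\frac{d}{dt}\frac12\norm{e(t)}^2$ and conclude $\norm{v_\delta(t)-v_0(t)}\leq\delta t$ by comparison; this is exactly the continuous analogue ($\gamma k=t_k$), and evaluating at $t_\delta=\gamma\kd$ reproduces the quantity $\alpha\cd\norm{X}^{-1}$ so that assumption~(iv) of Theorem~\ref{thm:mc_DGD} applies verbatim. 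Second, for the interval statement the discrete proof has to invoke the $O(1/\sqrt{k})$ bound on consecutive dual iterates from Lemma~\ref{lem:convergence-vk} to step outward from $\kd$, whereas you simply observe that $t\mapsto\norm{(\wdt+\zdtd|_{t})-(\wsol+\zsol)}$ is continuous (the ODE has a Lipschitz right-hand side) and strictly below $r$ at $\td$, hence below $r$ on a closed interval around $\td$; this is cleaner and is precisely what the continuous setting buys. The only cosmetic caveat is the Gr\"onwall step at $e=0$: since $e(0)=0$ you should pass through the integral form $\norm{e(t)}^2\leq 2\delta\int_0^t\norm{e(s)}\,ds$ or a standard comparison lemma rather than differentiating $\norm{e(t)}$ directly, but this is routine and does not affect the argument.
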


Figure \ref{fig:ode} shows a numerical comparison between DGD and its continuous-time counterpart.

\section{Local characterization of the regularization error}\label{sec:llc}

From the previous section, we have seen that the model consistency property holds for a range of points. 

Since along $\Mwsol$, function $R$ is $C^2$-smooth near $\wsol$, this implies that in the interval $\Kd$, we can obtain a finer characterization of the error $\norm{\wkd-\wsol}$, which we call as {\it regularization error}. 
This result is motivated by the local linear convergence of first-order method, see for instance \cite{liang2014local,liang2017activity,poon2019trajectory,molinari2019convergence,klopfenstein2024local} and the references therein. 

For the sake of simplicity, in this section we only focus on the case where $\Msol$ is affine, this includes $\ell_1$-norm, TV semi-norm and $\ell_{1,2}$-norm in Table \ref{table:examples-psf}. 
Since $\Msol$ is affine, it can be represented as 
\[
\Msol = \wsol + \Twsol ,
\]
where $T_{\wsol}$ is the tangent space of $\Msol$ at $\wsol$. Moreover, for any $w\in\Msol$, there holds
$$
w-\wsol = P_{\Twsol}(w-\wsol) ,
$$
where $P_{\Twsol}$ is the projection operator associated with $\Twsol$. 
Take point $\wKmd$, and define the following matrix
\[
\MIR := \PTwsol - \qfrac{1}{\|X\|^2} \PTwsol \Pa{ \Id + \nabla^2_{\Mwsol} R(\wKmd) }^{-1} \XTwsol^{\top}\XTwsol ,
\]
where $X_{\Twsol} := X \PTwsol$.

The spectral radius of $\MIR$ is denoted by $\rhoMIR$, and to characterize its property we need the following  assumption between $\Twsol$ and $X$.

\begin{assumption}[Restricted injectivity]
Suppose the following condition holds
\begin{equation}\tag{$\INJ_{\Twsol}$}\label{INJ}
\Ker\pa{ X } \cap \Twsol = \{0\} ,
\end{equation}
where $\Ker(X)$ is the null space of $X$. 
\end{assumption}

\begin{remark}
Condition \eqref{INJ} is also considered in \cite{liang2017activity} where the authors study the local linear convergence of the proximal gradient type methods, it implies $ X_{\Twsol}^\top  X_{\Twsol}$ is positive definite along $\Twsol$. 
\end{remark}

To characterize the regularization error $\norm{\wkd-\wsol}$, we suppose that the early stopping point $\wkdd$ has the minimal distance to $\wsol$, and denote it by $d^\dagger_{\delta}$, that is
\[
d^\dagger_{\delta} := \norm{\wkdd-\wsol} = \min_{k\in\Kd} \norm{\wkd-\wsol} . 
\]
We have the following result.

\begin{theorem}[Characterization of regularization error]\label{thm:reg-error}
Let $R$ be partly smooth at $\wsol$ relative to $\Msol$ and suppose that $\Msol = \wsol + \Twsol$ is affine. Let the assumptions in Theorem~\ref{thm:mc_DGD} and condition \eqref{INJ} hold. Then there holds
\begin{itemize}
    \item[i).] The spectral radius satisfies $\rhoMIR < 1$. 

    \item[ii).] Fixed stepsize as $\gamma = \alpha/\norm{X}^2$, then
\begin{equation}\label{eq:linarization}
{\wkpd - \wkd} 
= \MIR \pa{\wkd - \wkmd} + \eta^{(k)}
\end{equation}
where $\eta^{(k)} = o(\norm{\wkpd-\wKmd})  + o(\norm{\wkd-\wKmd})  + o(\norm{\wkmd-\wKmd})$. 
    \item[iii).] If $\Kd$ contains more than three points, then $\forall k\in [\Km, \KM[$ and any $\rho\in ]\rhoMIR, 1[$
\[
    \|\wkpd-\wkd\| 
    \leq O( \rho^{k-\Km} ) . 
\]
\item[iv).] With the above $\rho$, there exists a constant $D>0$ such that the regularization error~satisfies
\[
\norm{\wkd-\wsol} \leq d^\dagger_{\delta} + D \rho^{\min\{k, \kd\}-\Km} \times   \sfrac{ 1 - \rho^{|k-\kd|} }{1 - \rho}  . 
\]
\end{itemize}

\end{theorem}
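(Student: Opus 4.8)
The plan is to handle the four claims in order, relying throughout on the affine model consistency of Theorem~\ref{thm:mc_DGD}: for $\delta$ small and all $k\in\Kd$ the iterates satisfy $\wkd\in\Msol=\wsol+\Twsol$, so every consecutive increment $\wkpd-\wkd$ is tangent, i.e. lies in $\Twsol$, and $\PTwsol$ acts as the identity on it. For part (i) I would argue purely at the matrix level, writing $\MIR = \PTwsol - \norm{X}^{-2}AB$ on $\Twsol$ with $A:=\Pa{\Id+\nabla^2_{\Mwsol}R(\wKmd)}^{-1}$ and $B:=\XTwsol^{\top}\XTwsol$. Since $R|_{\Msol}$ is convex its Riemannian Hessian is positive semidefinite, so $A$ is symmetric positive definite with $\norm{A}\le1$; under \eqref{INJ} the map $\XTwsol$ is injective on $\Twsol$, hence $B$ is symmetric positive definite there. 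The product $AB$ is similar to the symmetric positive definite matrix $A^{1/2}BA^{1/2}$, so its eigenvalues are real and lie in $\big(0,\norm{A}\,\norm{B}\big]\subseteq\big(0,\norm{X}^2\big]$. Consequently the eigenvalues of $\MIR$ equal $1-\norm{X}^{-2}\mu_i\in[0,1)$, which gives $\rhoMIR<1$.

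The heart of the argument is part (ii), the linearization. From the proximal step the inclusion $\zkd:=-X^{\top}\vkd\in\partial\Ra(\wkd)$ holds; projecting onto $\Twsol$ and using that on the affine manifold $\PTwsol\partial\Ra(\wkd)$ collapses to the single-valued Riemannian gradient $\nabla_{\Mwsol}\Ra(\wkd)=\nabla_{\Mwsol}R(\wkd)+\alpha\PTwsol\wkd$ (see \eqref{eq:riemannain_gradient_hessian}), I would apply the Riemannian Taylor expansion of Lemma~\ref{lem:taylor} around the fixed reference point $\wKmd$ to obtain
\[
\PTwsol\zkd = \nabla_{\Mwsol}\Ra(\wKmd) + \Pa{\nabla^2_{\Mwsol}R(\wKmd)+\alpha\PTwsol}(\wkd-\wKmd) + o\pa{\norm{\wkd-\wKmd}}.
\]
Differencing this identity at consecutive indices turns the dual update $\zkpd-\zkd=-\gamma X^{\top}(X\wkd-\yd)$ into a relation for $\Pa{\nabla^2_{\Mwsol}R(\wKmd)+\alpha\PTwsol}(\wkpd-\wkd)$; writing the analogous relation at index $k-1$ and subtracting cancels the data/noise term $\yd$ and leaves a recursion purely in the increments. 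Solving for $\wkpd-\wkd$, substituting $\gamma=\alpha/\norm{X}^2$, and using $\PTwsol X^{\top}X(\wkd-\wkmd)=\XTwsol^{\top}\XTwsol(\wkd-\wkmd)$ (valid since the increment is tangent) yields exactly \eqref{eq:linarization}, with $\eta^{(k)}$ collecting the bounded-operator images of the three Taylor remainders.

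Given (i) and (ii), part (iii) follows from the standard perturbed-linear-iteration argument: since $\rhoMIR<1$, for any $\rho\in\,]\rhoMIR,1[$ there is a norm in which $\MIR$ contracts with factor strictly below $\rho$; for $\delta$ small the trajectory is confined to a shrinking neighborhood of $\wsol$, so the relative size of the $o(\cdot)$ remainders in $\eta^{(k)}$ can be absorbed, and iterating $\wkpd-\wkd=\MIR(\wkd-\wkmd)+\eta^{(k)}$ from the left endpoint $\Km$ (which requires $\Kd$ to contain more than three points, so that $k-1,k,k+1$ stay in range) gives $\norm{\wkpd-\wkd}\le O(\rho^{k-\Km})$. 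Part (iv) is then a telescoping estimate: by the minimality defining $d^\dagger_{\delta}$ we have $\norm{\wkd-\wsol}\le d^\dagger_{\delta}+\norm{\wkd-\wkdd}$, and bounding $\norm{\wkd-\wkdd}\le\sum_{j=\min\{k,\kd\}}^{\max\{k,\kd\}-1}\norm{\wjpd-\wjd}$ with part (iii) collapses to the geometric sum $D\,\rho^{\min\{k,\kd\}-\Km}\,\frac{1-\rho^{|k-\kd|}}{1-\rho}$.

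The main obstacle is making part (ii) rigorous and then propagating it through part (iii). The linearization rests on the increments remaining tangent and on controlling the Taylor remainders, but the delicate point is that $\Kd$ is a \emph{finite} index window, so the terms $o(\norm{\,\cdot\,-\wKmd})$ are not genuine asymptotics in $k$; they must instead be controlled uniformly via the smallness of the entire trajectory, which is driven by $\delta\to0$ through Theorem~\ref{thm:mc_DGD}. Guaranteeing that this smallness suffices to absorb $\eta^{(k)}$ into the contraction \emph{without the perturbation accumulating across the $O(\delta^{-1})$ iterates of the window} is where the genuine care lies.
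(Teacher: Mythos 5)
Your proposal follows essentially the same route as the paper: the same spectral argument for $\rhoMIR<1$ via similarity to a symmetric positive definite matrix, the same linearization obtained by projecting the proximal optimality inclusion onto $\Twsol$ and applying the Riemannian Taylor expansion around the reference point $\wKmd$ before differencing consecutive iterates to cancel $\yd$, and the same unrolling-plus-telescoping for parts (iii) and (iv). The only cosmetic difference is in (iii), where the paper bounds the accumulated perturbation simply by the finiteness of the window $\Kd$ and factors out $\rho^{k-\Km}$ into the constant $D$, whereas you propose absorbing the remainders into an equivalent contraction norm — the concern you raise about uniform control of the $o(\cdot)$ terms is legitimate but applies equally to the paper's own argument.
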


\begin{proof}  In the following we prove the claims step by step.
\paragraph{i). Spectral radius:}
Since $\Mwsol$ is affine, $\nabla^2_{\Mwsol} R(\wKmd)$ is symmetric positive semi-definite \cite[Lemma 4.2]{liang2017activity}. Then $\Id + \sfrac{1}{\alpha}  \nabla^2 \widetilde{R}(\wKmd)$ is invertible, and denote
\[
W_{\Twsol} 
= \PTwsol + \sfrac{1}{\alpha}  \nabla^2_{\Mwsol} R(\wKmd)
=  \Pa{ \Id + \sfrac{1}{\alpha}  \nabla^2_{\Mwsol} R(\wKmd) } \PTwsol ,
\]
whose left inverse reads
\[
W_{\Twsol}^{-1}
= \PTwsol \Pa{ \Id + \sfrac{1}{\alpha}  \nabla^2_{\Mwsol} R(\wKmd) }^{-1}   ,
\]
as we have $W_{\Twsol}^{-1} W_{\Twsol} = \PTwsol$ which is the identity mapping on $\Twsol$. 
Clearly, all the non-zero eigenvalues of  $W_{\Twsol}^{-1}$ are in $]0,1]$. 
Note that $\Pa{ \Id + \sfrac{1}{\alpha}  \nabla^2_{\Mwsol} R(\wKmd) }^{-1} \XTwsol^{\top}\XTwsol$ is similar to a symmetric matrix, hence all its eigenvalues are real. As a result, the eigenvalues of $$\PTwsol \Pa{ \Id + \sfrac{1}{\alpha}  \nabla^2_{\Mwsol} R(\wKmd) }^{-1} \XTwsol^{\top}\XTwsol = W_{\Twsol}^{-1} \XTwsol^{\top}\XTwsol $$ are also real. Let  $d = {\dim}(\Twsol)$ be the dimension of $\Twsol$ and $\sigma_i> 0, i=1,...,d$ be the non-zero eigenvalues of $W_{\Twsol}^{-1} \XTwsol^{\top}\XTwsol$, the corresponding eigenvalues of $\MIR$ then read
\[
\xi_i = 1 - \sfrac{1}{\norm{X}^2}\sigma_i,~~ i=1,...,d .
\]
Since $\norm{\XTwsol} \leq \norm{X}$ and $\norm{W_{\Twsol}^{-1}} \leq 1$, we have $\sfrac{1}{\norm{X}^2}\sigma_i \in ]0, 1]$, which in turn means $\xi_i < 1$, and we conclude the first claim. 

\paragraph{ii). Linearization of iteration:}
From the update of DGD iteration, for any $k\in [\Km, \KM [$, we get
\[
\begin{aligned}
-\sfrac{1}{\alpha} X^{\top} \vkd - \wkd 
    &\in \alpha^{-1}\partial R(\wkd)   .
\end{aligned}
\]
Since $\Mwsol$ is affine, projecting the inclusion onto $\Twsol$ yields the Riemannian gradient
\begin{equation}\label{eq:wkd-lin}
\begin{aligned}
&\PTwsol \Pa{ -\sfrac{1}{\alpha} X^{\top} \vkd - \wkd }  \\
&= \alpha^{-1} \nabla_{\Mwsol} R(\wkd)   \\
&= \alpha^{-1} \nabla_{\Mwsol} R(\wKmd) + \alpha^{-1} \nabla_{\Mwsol}^2 R(\wKmd) (\wkd - \wKmd) + o(\norm{\wkd-\wKmd})   \\
\end{aligned}
\end{equation}
where the second equality comes from the Taylor expansion with respect to $\wKmd$; see Lemma~\ref{lem:taylor}. Repeat the above procedure to $\wkpd$, we get
\begin{equation}\label{eq:wkpd-lin}
\begin{aligned}
&\PTwsol \Pa{ -\sfrac{1}{\alpha} X^{\top} \vkpd - \wkpd }  \\
&= \alpha^{-1} \nabla_{\Mwsol} R(\wKmd) + \alpha^{-1} \nabla_{\Mwsol}^2 R(\wKmd) (\wkpd - \wKmd) + o(\norm{\wkpd-\wKmd}) .
\end{aligned}
\end{equation}
Taking the difference of \eqref{eq:wkd-lin} and \eqref{eq:wkpd-lin}, we get
\[
\begin{aligned}
&\alpha^{-1} \nabla_{\Mwsol}^2 R(\wKmd) (\wkpd - \wkd) + o(\norm{\wkpd-\wKmd}) + o(\norm{\wkd-\wKmd}) \\
&= -\sfrac{1}{\alpha} \PTwsol X^{\top}\pa{\vkpd - \vkd} - \PTwsol (\wkpd - \wkd)  \\
&= -\sfrac{\gamma}{\alpha} \PTwsol X^{\top} \pa{X \wkd - \yd} - \PTwsol (\wkpd - \wkd) .
\end{aligned}
\]
Recall that $W_{\Twsol} = \PTwsol + \sfrac{1}{\alpha}  \nabla^2_{\Mwsol} R(\wKmd)$, this means from the above derivation we obtain
\begin{equation}\label{eq:wkpd-lin2}
\begin{aligned}
&W_{\Twsol} (\wkpd - \wkd) + o(\norm{\wkpd-\wKmd}) + o(\norm{\wkd-\wKmd})  \\
&= -\sfrac{\gamma}{\alpha} \PTwsol X^{\top} \pa{X \wkd - \yd} .
\end{aligned}
\end{equation}
Similarly, we have
\begin{equation}\label{eq:wkd-lin2}
\begin{aligned}
&W_{\Twsol} \pa{\wkd - \wkmd}  + o(\norm{\wkd-\wKmd})  + o(\norm{\wkmd-\wKmd})  \\
&= -\sfrac{\gamma}{\alpha} \PTwsol X^{\top} \pa{X \wkmd - \yd} .
\end{aligned}
\end{equation}
Combining \eqref{eq:wkpd-lin2} and \eqref{eq:wkd-lin2} leads to
\[
\begin{aligned}
W_{\Twsol}\pa{\wkpd - \wkd}  
&=  W_{\Twsol} \pa{\wkd - \wkmd} -\qfrac{\gamma}{\alpha} \PTwsol X^{\top} X \pa{\wkd - \wkmd} + \eta^{(k)}  \\
&= \Pa{W_{\Twsol} -\qfrac{\gamma}{\alpha} \PTwsol X^{\top} X \PTwsol } \pa{\wkd - \wkmd} + \eta^{(k)}  \\
&= \Pa{W_{\Twsol} -\qfrac{\gamma}{\alpha} \XTwsol^{\top} \XTwsol } \pa{\wkd - \wkmd} + \eta^{(k)} ,
\end{aligned}
\]
where $\eta^{(k)} = o(\norm{\wkpd-\wKmd})  + o(\norm{\wkd-\wKmd})  + o(\norm{\wkmd-\wKmd}) $. 
Let $\gamma = \alpha/\norm{X}^2$, taking the left inverse of $W_{\Twsol}$ we get
\[
\begin{aligned}
{\wkpd - \wkd} 
&= W_{\Twsol}^{-1} \bPa{ W_{\Twsol} - \sfrac{1}{\norm{X}^2} \XTwsol^{\top} \XTwsol } \pa{\wkd - \wkmd} + W_{\Twsol}^{-1} \eta^{(k)} \\
&= \MIR \pa{\wkd - \wkmd} + \eta^{(k)} ,
\end{aligned}
\]
we abuse the notation that $\eta^{(k)} = W_{\Twsol}^{-1} \eta^{(k)}$. 
To this point, the second claim is proved.

\paragraph{iii). Linear decrease of $\norm{\wkpd-\wkd}$:}
Tracking back to $\Km$, we get
\[
\begin{aligned}
{\wkpd - \wkd}
&= \MIR \pa{\wkd - \wkmd} + \eta^{(k)}   \\
&= \MIR^{k-\Km} \pa{ w_{\delta}^{(\Km+1)}  - w_{\delta}^{(\Km)} } + \sum_{i=\Km + 1}^{k} \MIR^{k-i} {\eta}^{(i)}  , 
\end{aligned}
\]
from which we get 
\[
\norm{\wkpd - \wkd}
\leq \norm{ \MIR^{k-\Km} } \norm{\ w_{\delta}^{(\Km+1)}  - w_{\delta}^{(\Km)}} + \norm{ \msum_{i=\Km + 1}^{k} \MIR^{k-i} {\eta}^{(i)}  }  .
\]
The spectral radius theorem states that there exists $C>0$ such that 
\[
    \norm{ \MIR^{k-\Km} } \leq C \rho^{k-\Km} ,
\]
for any $\rho\in]\rhoMIR, 1[$. 
Since $k\leq\KM$ is finite, $\norm{ \msum_{i=\Km + 1}^{k} \MIR^{k-i} {\eta}^{(i)}  }$ is also finite, then
\[
    \norm{ \msum_{i=\Km + 1}^{k} \MIR^{k-i} {\eta}^{(i)}  }
    = \rho^{k-\Km} \times  \sfrac{1}{\rho^{k-\Km}} \norm{ \msum_{i=\Km+1}^{k} \MIR^{k-i} {\eta}^{(i)}  }  .
\]
Let 
\[
D = \max\bBa{ C,~ \sfrac{1}{\rho^{\KM-\Km}} \norm{ \msum_{i=\Km + 1}^{\KM} \MIR^{k-i} {\eta}^{(i)} } } ,
\]
we have 
\[
    \norm{\wkpd - \wkd} 
    \leq D \rho^{k-\Km}
\]
for $k\in [\Km, \KM[$, which is the third claim.

\paragraph{iv). Behavior of the regularization error:}
For the regularization error, we have the following decomposition
\begin{equation}\label{eq:reg-error-decom}
\begin{aligned}
    \|\wkd-\wsol\| 
    &= \|\wkd-\wkdd + \wkdd - \wsol\| \\
    &\leq \|\wkd-\wkdd \| + \|\wkdd - \wsol\| \\
    &= \|\wkd-\wkdd \| + d^\dagger_{\delta} . 
    \end{aligned}
\end{equation}
We need to take care of the first, for which we have two cases depending on the relation between $k$ and $\kd$.
\begin{itemize}
    \item {\bf Case $k\leq\kd$:} For this case, we have
    \[
\begin{aligned}
 \|\wkd-\wkdd \| 
 &\leq \sum_{i=0}^{ \kd-k-1} \norm{ w_{\delta}^{(k+1+i)} - w_{\delta}^{(k+i)} } \\ 
 &\leq \sum_{i=0}^{\kd-k-1} D \rho^{k+i-\Km} 
 = D \rho^{k-\Km} \times   \sfrac{ 1 - \rho^{\kd-k} }{1 - \rho}  .
\end{aligned}
\]
    \item {\bf Case $k>\kd$:} For this case, 
    \[
\begin{aligned}
 \|\wkd-\wkdd \| 
 &\leq \sum_{i=0}^{ k - \kd-1} \norm{ w_{\delta}^{(\kd+1+i)} - w_{\delta}^{(\kd+i)} } \\ 
 &\leq \sum_{i=0}^{k - \kd-1} D \rho^{\kd+i-\Km} 
 = D \rho^{\kd-\Km} \times   \sfrac{ 1 - \rho^{k-\kd} }{1 - \rho}  .
\end{aligned}
\]
\end{itemize}
Combining the two cases together, we get
\[
\|\wkd-\wkdd \| 
\leq D \rho^{\min\{k, \kd\}-\Km} \times   \sfrac{ 1 - \rho^{|k-\kd|} }{1 - \rho} .
\]
Plugging the above result into \eqref{eq:reg-error-decom} we obtain the desired claim, and the proof is concluded.
\end{proof}


\begin{remark}$~$
\begin{itemize}
    \item The small $o$-term $\eta^{(k)}$ in the linearization \eqref{eq:linarization} vanishes when $R$ is locally polyhedral around $\wsol$ along $\Mwsol$. 

    \item The linear decrease of the residual error $\norm{\wkpd-\wkd}$ holds only for the model consistency interval $[\Km, \KM[$. When $R$ is moreover locally polyhedral around $\wsol$ along $\Mwsol$, then $\rho = \rhoMIR$ is allowed. 

    \item The result iv) of Theorem \ref{thm:reg-error} indicates that in the interval of model consistency, the regularization error decreases to the optimal $d^\dagger_\delta$ at least linearly, then start to increase with an upper bound $D \rho^{\kd-\Km}$; see Figure \ref{fig:regerror-l1} for illustration.

    \item Since Theorem \ref{thm:reg-error} assumes that the manifold $\Msol$ is affine, it does not apply to the nuclear norm, whose associated manifold is curved. By incorporating additional tools from Riemannian geometry --- such as parallel transport \cite{liang2017activity} --- Theorem \ref{thm:reg-error} can be extended to more general manifolds $\Msol$, including the nuclear norm. However, to maintain clarity and focus, we omit this theoretical extension and instead provide a numerical illustration in Figure \ref{fig:regerror-nn} of the next section.
\end{itemize}

\end{remark}

\section{Numerical Experiments}\label{sec:ne}
 In this section, we present numerical experiments to support our theoretical findings. We first illustrate the model consistency result, and then the local behavior of the regularization error. Four low-complexity regularization functions $R$ are considered: the $\ell_1$ and $\ell_{1,2}$ norms, one-dimensional total variation (1D-TV), and the nuclear norm.

\subsection{Model consistency}

\paragraph{Problem settings}
Recall that in the forward model \eqref{eq:yd}, $X\in\bbR^{n\times p}$ is the forward operator and $\varepsilon \sim \mathcal{N}(0, \delta^2)$ is the noise.
For each choice of $R$, the corresponding problem settings are
\begin{itemize} 
    \item {\bf $\ell_1, \ell_{1,2}$-norms} $(n,p) = (100, 500)$, $\wsol \in \calW$ has $5$ non-zero elements, $X\in\bbR^{n\times p}$ with element sampled from standard normal distribution normalized by $\sqrt{n}$. For $\ell_{1,2}$-norm, the group size is $5$, without overlapping.  

    \item {\bf 1d-TV} $(n,p) = (20, 50)$, $\wsol \in \calW$, $\DDIF$ is the discrete gradient operator, and $\|\DDIF \wsol\|_1=1$. $X\in\bbR^{n\times p}$ with element sampled from standard normal distribution normalized by $\sqrt{n}$. 

    \item {\bf Nuclear norm} $\wsol\in\bbR^{20\times 20}$ is a square matrix with $\rank(\wsol)=1$, $X\in \bbR^{20\times 20}$ is a $0$-$1$ projection mask with  $50\%$ of the entries equal to $1$.  
\end{itemize}
The  noise level is determined by the signal-to-noise-ratio (SNR), which is defined by
\[
{\tt SNR} = 20 \log\Ppa{ \frac{\norm{y}}{\norm{\varepsilon}} } . 
\]
In our experiment, ${\tt SNR}=40$ is considered. 

In the model \eqref{Pdelta0}, we fix $\alpha=0.01$. 
For both DGD and ADGD methods, the step-size is $\gamma= \alpha/\norm{X}^2$ and the parameter $\theta$ for ADGD is set as $5$. Both algorithms are initialized at $0$, as specified in Algorithm~\ref{alg:DGD}  and \ref{alg:ADGD}.

\paragraph{Case $\ell_1$-norm}
We first present results for the $\ell_1$-norm case, shown in Figure~\ref{fig:lasso}: the {\it left} column corresponds to DGD, and the {\it right} column to ADGD.

In each plot, the {\it blue} line represents the normalized error $\frac{ \norm{\wkd - \wsol} }{ \norm{\wsol} }$, while the {\it red} line shows the support size of $\wkd$. The following observations can be made:

\begin{itemize}
\item {\bf Support size $|\supp(\wkd)|$:} Initializing at 0, the support of $\wkd$ grows, and for a certain range of iterations $k$, we observe that $|\supp(\wkd)| = 5$, indicating model consistency. As the iterations continue, the support size increases beyond 5.
\item {\bf Iterative regularization:} The normalized error \(\frac{ \norm{\wkd - \wsol} }{ \norm{\wsol} }\) attains its minimum within the model-consistent regime. Notably, the error remains nearly constant throughout this interval, suggesting that model consistency can serve as a practical criterion for determining the stopping time.
\end{itemize}

\begin{figure}[!htb]
\centering
 \subfloat[DGD]
  {
      \includegraphics[width=0.475\textwidth, trim={0mm 0mm 0mm 0mm},clip]{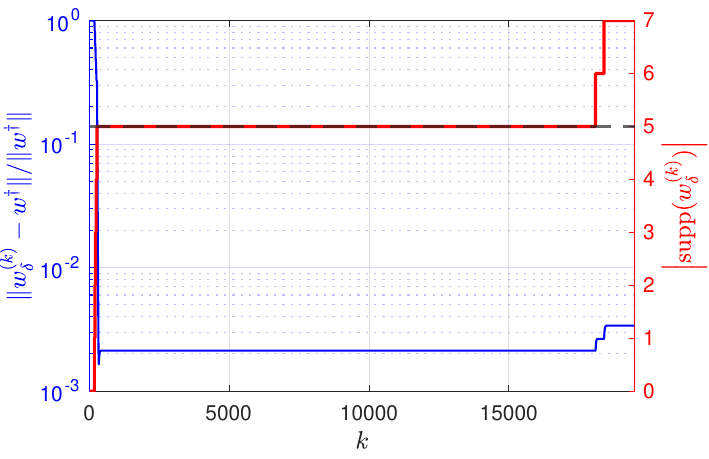}
  }
  \subfloat[ADGD]
  {
      \includegraphics[width=0.475\textwidth, trim={0mm 0mm 0mm 0mm},clip]{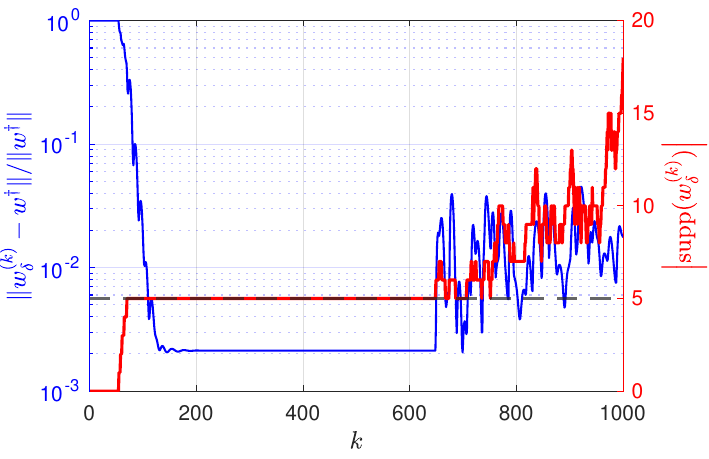}
  }
  \caption{Model consistency result of $\ell_1$-norm.}  
  \label{fig:lasso}            
\end{figure}

For this problem, we also conduct an experiment to determine the smallest {\tt SNR} value for which model consistency occurs. Specifically, we vary the {\tt SNR} in the range $[10, 60]$. For each value, we run DGD and record the support size of the iterate $\wkdd$ that is closest to $\wsol$ in terms of Euclidean distance. The support size $|\supp(\wkdd)|$ as a function of {\tt SNR} is shown in Figure~\ref{fig:differror}. From the plot, we observe that model consistency is achieved when {\tt SNR} exceeds 21.

\begin{figure}[!htb]
\centering 
\includegraphics[width=0.6\textwidth, trim={1mm 1mm 1mm 1mm},clip]{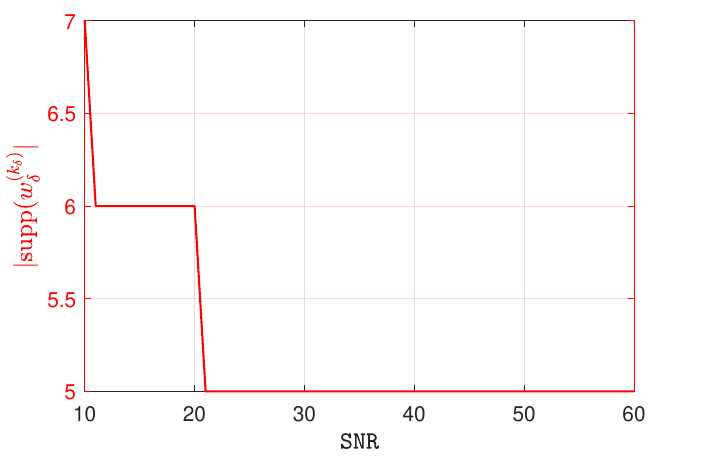}
\caption{Model consistency of DGD over different  {\tt SNR} values.}
\label{fig:differror}
\end{figure}


\paragraph{Other cases}

Figures~\ref{fig:othercases-l12}-\ref{fig:othercases-nn}  present the model consistency results for the $\ell_{1,2}$-norm (Figures~\ref{fig:othercases-l12}), 1d-TV (Figures~\ref{fig:othercases-1dtv}), and nuclear norm (Figures~\ref{fig:othercases-nn}). The observed behavior is largely consistent with that of the $\ell_1$-norm case. In addition, we note the following:

\begin{itemize}
\item For the $\ell_{1,2}$-norm, the non-zero groups of $\wkd$ within the model consistency interval exactly match those of $\wsol$.
\item The 1d-TV regularizer appears more sensitive to noise. Although model consistency holds in this particular example, there are instances—under the same {\tt SNR} value—where it fails. For further discussion on the structure sensitivity to noise, we refer the reader to \cite{vaiter2017model}.
\end{itemize}

\begin{figure}[!htb]
\centering
   \subfloat[$\ell_{1,2}$-norm: DGD]  
  {
      \includegraphics[width=0.475\textwidth, trim={0mm 0mm 0mm 0mm},clip]{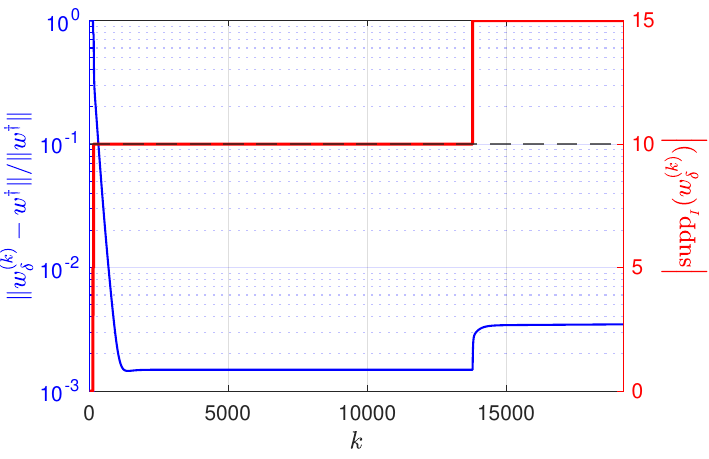}
  }
  \subfloat[$\ell_{1,2}$-norm: ADGD]
  {
      \includegraphics[width=0.475\textwidth, trim={0mm 0mm 0mm 0mm},clip]{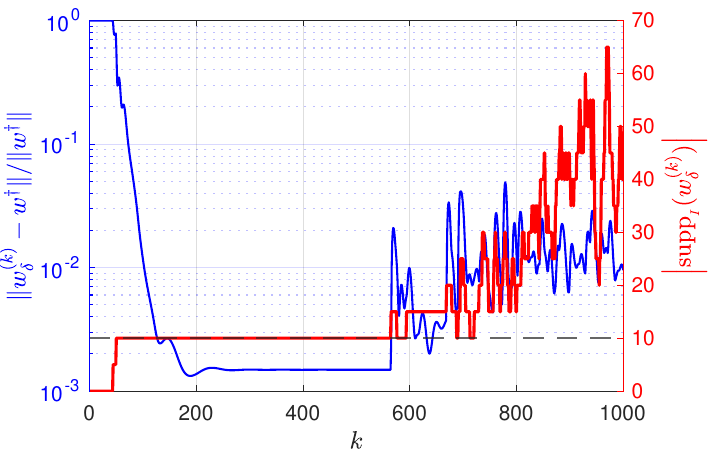}
  } 
  \caption{Model consistency result of $\ell_{1,2}$-norm.} \label{fig:othercases-l12}          
\end{figure}

\begin{figure}[!htb]
\centering
     \subfloat[1d-TV: DGD] 
  {
      \includegraphics[width=0.475\textwidth, trim={0mm 0mm 0mm 0mm},clip]{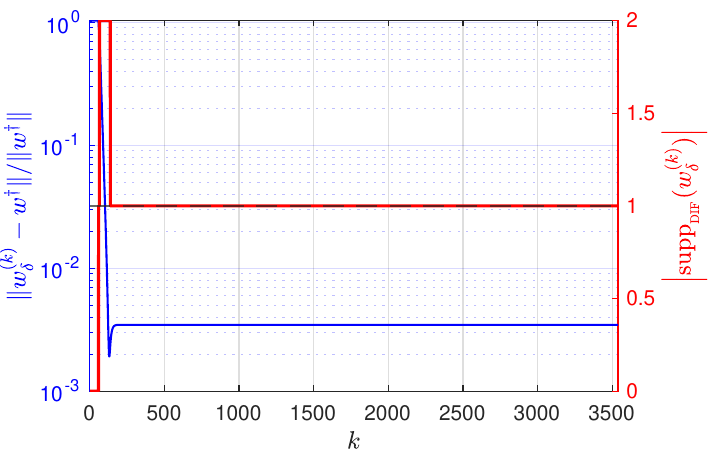}
  }
  \subfloat[1d-TV: ADGD]
  {
      \includegraphics[width=0.475\textwidth, trim={0mm 0mm 0mm 0mm},clip]{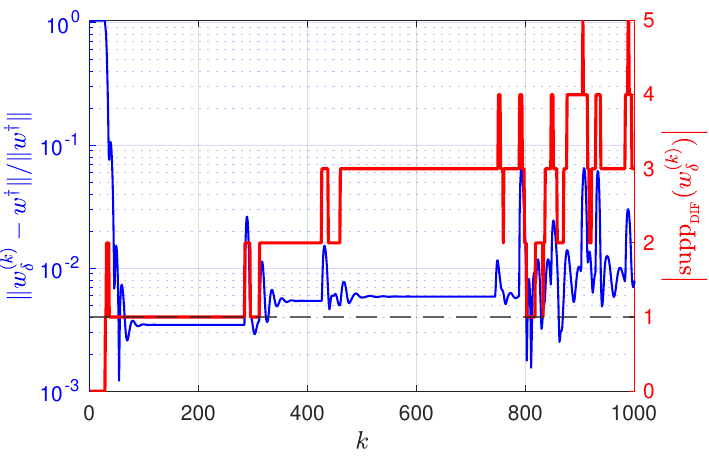}
  } 
  \caption{Model consistency result of 1d-TV.} \label{fig:othercases-1dtv}          
\end{figure}

\begin{figure}[!htb]
\centering
     \subfloat[Nuclear norm: DGD] 
  {
      \includegraphics[width=0.475\textwidth, trim={0mm 0mm 0mm 0mm},clip]{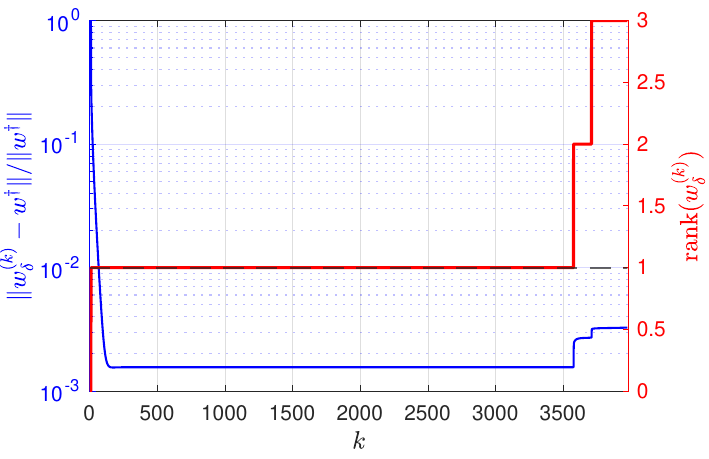}
  }
  \subfloat[Nuclear norm: ADGD]
  {
      \includegraphics[width=0.475\textwidth, trim={0mm 0mm 0mm 0mm},clip]{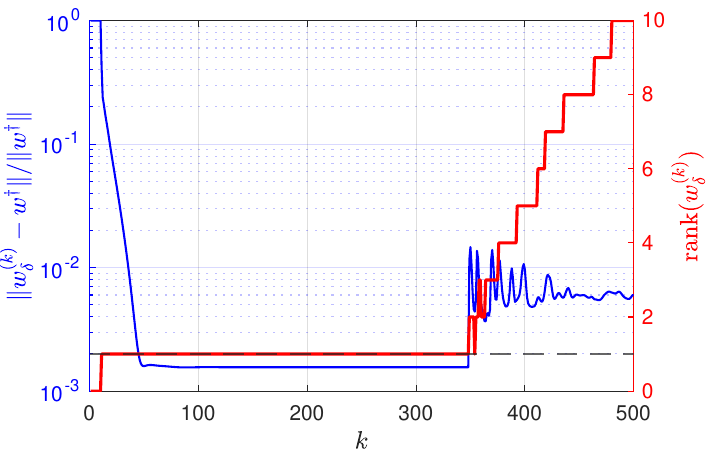}
  }
  \caption{Model consistency result of nuclear norm.} \label{fig:othercases-nn}          
\end{figure}

\paragraph{Model consistency of the continuous dynamics} 

We also verify model consistency for the continuous dynamical system \eqref{ODE:general}, using the $\ell_1$ norm with {\tt SNR} = 40. The system is solved using the fourth-order Runge–Kutta method. The results, shown in Figure~\ref{fig:ode}, are presented alongside those of the DGD method. It can be seen that the continuous dynamical system is very close to DGD, and achieves model consistency.

\begin{figure}[!htb]
\centering
\includegraphics[width=0.65\textwidth]{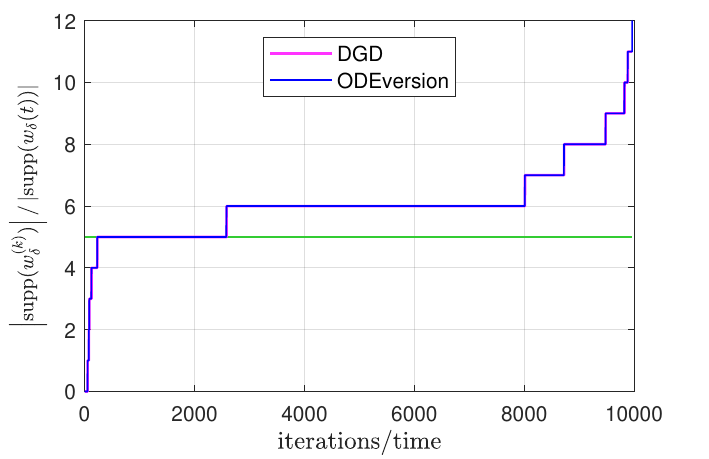}
\caption{Model consistency of the dynamical system \eqref{ODE:general}.}\label{fig:ode}
\end{figure}


\subsection{Local behavior of the regularization error}

From the above illustration, model consistency indeed occurs in an interval. In this part, we illustrate the result of Theorem \ref{thm:reg-error}, the local behavior of the regularization error.

Due to numerical precision limitations, the problem sizes considered in this section are smaller than those in the previous experiments. Specifically, we use the following settings:

\begin{itemize}
\item {\bf $\ell_1$ and $\ell_{1,2}$ norms:} We set $(n, p) = (20, 100)$, where $\wsol \in \calW$ has 2 non-zero entries. The measurement matrix $X \in \mathbb{R}^{n \times p}$ is generated with i.i.d. standard normal entries and normalized by $\sqrt{n}$. For the $\ell_{1,2}$-norm case, the group size is 2 with non-overlapping groups.

\item {\bf Nuclear norm:} \(\wsol \in \mathbb{R}^{10 \times 10}\) is a rank-one square matrix. The observation operator \(X \in \mathbb{R}^{10 \times 10}\) is a binary projection mask, where only 40\% of the entries equal to 1.

\end{itemize}

For noise $\varepsilon$, ${\tt SNR}=40$ is considered. 
In the model \eqref{Pdelta0}, same as before we fix $\alpha=0.01$. 
The step-size is $\gamma= \alpha/\norm{X}^2$ and the starting point of both algorithms is set as $0$, as indicated in the algorithms. 

In the experiment, the MATLAB variable-precision arithmetic {\tt vpa} is used for high precision. 

\paragraph{$\ell_1$-norm}
In Figure \ref{fig:regerror-l1} we provide the observations of local linear convergence for $\ell_1$-norm, the left figure shows the behavior of $\norm{\wkpd-\wkd}$, while the right figure displays the convergence of $\norm{\wkd-\wsol}$ to $\norm{\wkdd-\wsol}$ over the model consistency interval.  
Note that for both figures, around iteration steps $2,000 - 3,000$, the black lines stagnate which is due to numerical precision limit. Otherwise, the observation complies with our theory. 
For this particular example, the $\wkdd$ appears at the end of the model consistency interval. 

\begin{figure}[!htb]
\centering
 \subfloat[Residual $\norm{\wkpd-\wkd}$]  
  {
      \includegraphics[width=0.45\textwidth, trim={5mm 1mm 5mm 2mm},clip]{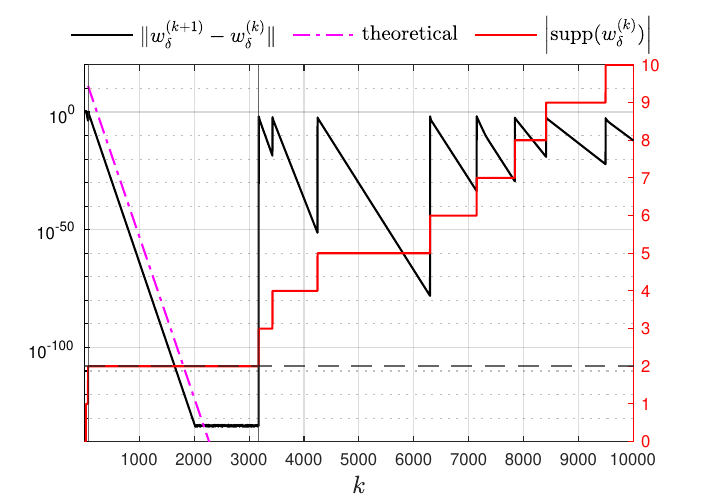}
  }
  \subfloat[Regularization error]
  {
      \includegraphics[width=0.45\textwidth, trim={5mm 1mm 5mm 2mm},clip]{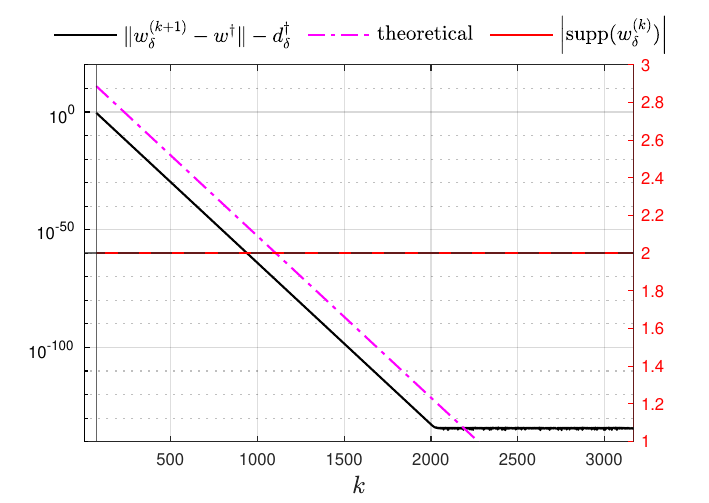}
  }
  \caption{Convergence behavior of regularization error of DGD for $\ell_1$-norm.}   
  \label{fig:regerror-l1}             
\end{figure}

In Figure \ref{fig:regerror-l12}  we provide the experiment on $\ell_{1,2}$-norm. The behavior of residual error is the same as the $\ell_1$-norm. While for the regularization error, the optimal $\wkdd$ appears inside the interval, and the error decreases first and then increases, which matches perfectly with our theory. 

\begin{figure}[!htb]
\centering
  \subfloat[Residual  $\norm{\wkpd-\wkd}$]   
  {
      \includegraphics[width=0.45\textwidth, trim={5mm 1mm 5mm 2mm},clip]{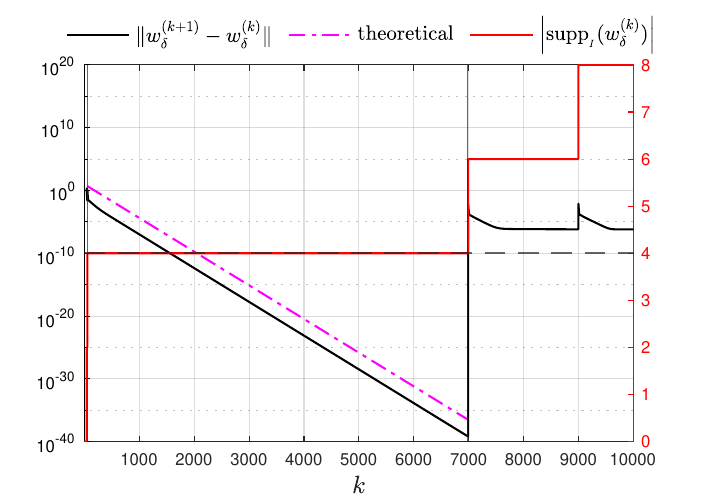}
  }
  \subfloat[Regularization error]
  {
      \includegraphics[width=0.45\textwidth, trim={5mm 1mm 5mm 2mm},clip]{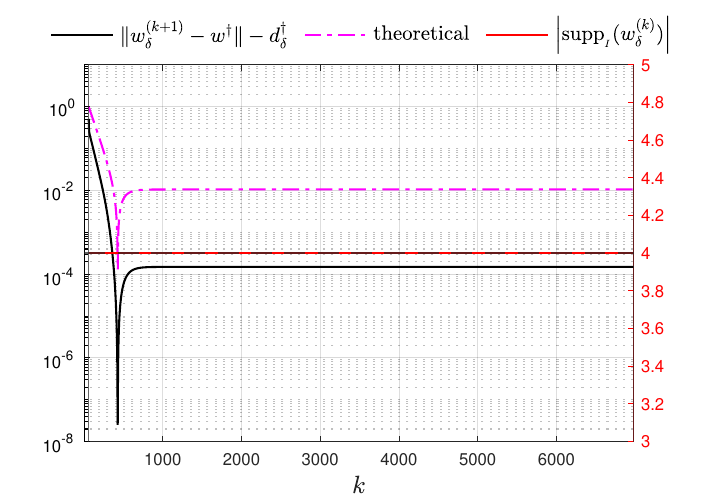}
  }
  \caption{Convergence behavior of regularization error of DGD for $\ell_{1,2}$-norm.}   
  \label{fig:regerror-l12}             
\end{figure}

Lastly we provide an example for the nuclear norm in Figure \ref{fig:regerror-nn}, thought our theory does not cover this case, similar observation is obtained. Note that the stagnation of the black line in the left figure is due to numerical precision.

\begin{figure}[!htb]
\centering
  \subfloat[Residual  $\norm{\wkpd-\wkd}$]   
  {
      \includegraphics[width=0.45\textwidth, trim={5mm 1mm 5mm 2mm},clip]{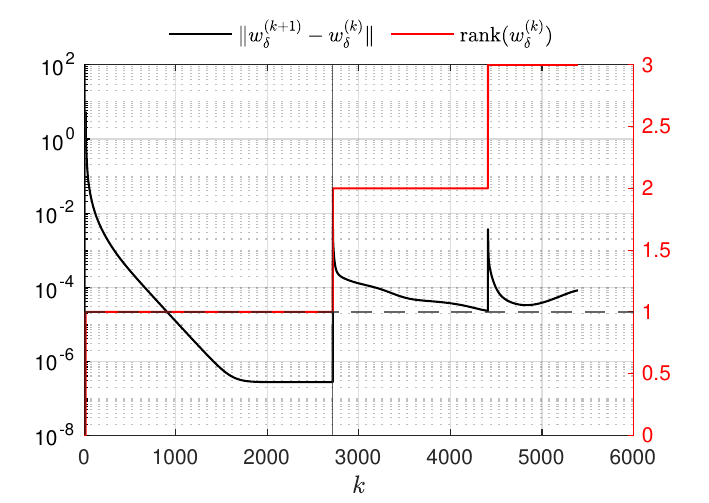}
  }
  \subfloat[Regularization error]
  {
      \includegraphics[width=0.45\textwidth, trim={5mm 1mm 5mm 2mm},clip]{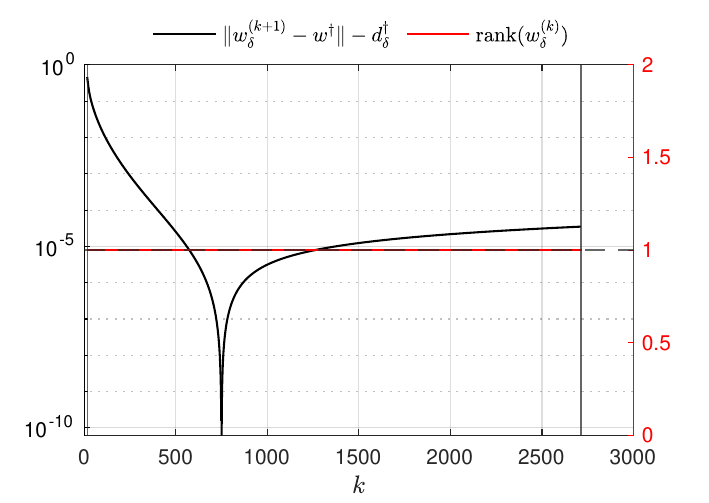}
  }
  \caption{Convergence behavior of regularization error of DGD for nuclear norm.}   
  \label{fig:regerror-nn}             
\end{figure}

\section{Conclusions}
In this paper, we investigate model consistency and local linear convergence of iterative regularization methods, focusing on dual gradient descent (DGD) and its accelerated variant. Building on recent advances in partial smoothness theory, we prove that, in the small-noise regime, the output of iterative regularization can recover the underlying model of the ground truth. Moreover, we show that model consistency persists over an interval of stopping times, during which the residual error of the DGD scheme converges linearly. Numerical experiments are presented to corroborate our theoretical results.

\vskip2mm
\noindent\textbf{Acknowledgements}
JL is supported by the National Natural Science Foundation of China (No. 12201405), the ``Fundamental Research Funds for the Central Universities'', the National Science Foundation of China (BC4190065), Shanghai Municipal Science and Technology Key Project (No. 22JC1401500) and the Shanghai Municipal Science and Technology Major Project (2021SHZDZX0102).  
S. V. Acknowledges the support of the European Commission (grant TraDE-OPT 861137), of the European Research Council (grant SLING 819789 and grant MALIN 101117133) the US Air Force Office of Scientific Research (FA8655-22-1-7034), the Ministry of Education, University and Research (PRIN 202244A7YL project ``Gradient Flows and Non-Smooth Geometric Structures with Applications to Optimization and Machine Learning''), and the project PNRR FAIR PE0000013 - SPOKE 10. The research by S. V. has been supported by the MIUR Excellence Department Project awarded to Dipartimento di Matematica, Università di Genova, CUP D33C23001110001. S. V. is a member of the Gruppo Nazionale per l’Analisi Matematica, la Probabilità e le loro Applicazioni (GNAMPA) of the Istituto Nazionale di Alta Matematica (INdAM). This work represents only the view of the authors. The European Commission and the other organizations are not responsible for any use that may be made of the information it contains.

\noindent 
\ifnum\arxivorIP=1
\section*{References}
\fi
\bibliographystyle{unsrt}
\bibliography{bibtex}

\end{document}